\documentclass[11pt]{article}

\usepackage{amsfonts}
\usepackage{enumitem}
\usepackage{shuffle,yfonts,amsmath,amssymb,amsthm}
\usepackage{hyperref}
\hypersetup{pdfstartview={XYZ null null 1.00}}
\usepackage{pdfpages}

\newcommand\nc{\newcommand}
\nc{\ES}{\mathsf{ES}}
\nc{\MZV}{\mathsf{MZV}}
\nc{\CMZV}{\mathsf{CMZV}}
\nc{\MtV}{\mathsf{MtV}}
\nc{\AMtV}{\mathsf{AMtV}}
\nc{\MTV}{\mathsf{MTV}}
\nc{\MSV}{\mathsf{MSV}}
\nc{\MMV}{\mathsf{MMV}}
\nc{\MMVo}{\mathsf{MMVo}}
\nc{\MMVe}{\mathsf{MMVe}}
\nc{\AMMV}{\mathsf{AMMV}}
\nc{\sha}{\shuffle} 
\nc{\si}{\sigma}
\nc{\gd}{\delta}
\nc{\ola}{\overleftarrow}
\nc{\ora}{\overrightarrow}
\nc{\lra}{\longrightarrow}
\nc{\Lra}{\Longrightarrow}
\nc\Res{{\rm Res}}
\nc\setX{{\mathsf{X}}}
\nc\fA{{\mathfrak{A}}}
\nc\evaM{{\texttt{M}}}
\nc\evaML{{\text{\em{\texttt{M}}}}}
\nc\z{{\texttt{z}}}
\nc\ta{{\texttt{a}}}
\nc\ty{{\texttt{y}}}
\nc\tx{{\texttt{x}}}
\nc\td{{\texttt{d}}}
\nc\tz{{\texttt{z}}}
\nc\txp{{\tx_1}} 
\nc\txn{{\tx_{-1}}} 
\nc\neo{{1}}
\nc{\yi}{{1}}
\nc\one{{-1}}
\nc\om{{\omega}}
\nc\omn{\omega_{-1}}
\nc\omz{\omega_0}
\nc\omp{\omega_{1}}
\nc\eps{{\varepsilon}}
\nc{\bfp}{{\bf p}}
\nc{\bfq}{{\bf q}}
\nc{\bfu}{{\bf u}}
\nc{\bfv}{{\bf v}}
\nc{\bfw}{{\bf w}}
\nc{\bfy}{{\bf y}}
\nc{\bfga}{{\boldsymbol{\sl{\alpha}}}}
\nc{\bfe}{{\boldsymbol{\sl{e}}}}
\nc{\bfi}{{\boldsymbol{\sl{i}}}}
\nc{\bfj}{{\boldsymbol{\sl{j}}}}
\nc{\bfk}{{\boldsymbol{\sl{k}}}}
\nc{\bfl}{{\boldsymbol{\sl{l}}}}
\nc{\bfm}{{\boldsymbol{\sl{m}}}}
\nc{\bfn}{{\boldsymbol{\sl{n}}}}
\nc{\bfs}{{\boldsymbol{\sl{s}}}}
\nc{\bfr}{{\boldsymbol{\sl{r}}}}
\nc{\bft}{{\boldsymbol{\sl{t}}}}
\nc{\bfx}{{\boldsymbol{\sl{x}}}}
\nc{\bfz}{{\boldsymbol{\sl{z}}}}
\nc\bfmu{{\boldsymbol \mu}}
\nc\bfgl{{\boldsymbol \lambda}}
\nc\bfsi{{\boldsymbol \sigma}}
\nc\bfet{{\boldsymbol \eta}}
\nc\bfeta{{\boldsymbol \eta}}
\nc\bfeps{{\boldsymbol \varepsilon}}
\nc\bfone{{\bf 1}}

\textwidth=160truemm \textheight=225truemm \evensidemargin=0mm
\oddsidemargin=0mm \topmargin=0mm \headsep=0mm
\parindent=2em
 \allowdisplaybreaks

\catcode`!=11
\let\!int\int \def\int{\displaystyle\!int}
\let\!lim\lim \def\lim{\displaystyle\!lim}
\let\!sum\sum \def\sum{\displaystyle\!sum}
\let\!sup\sup \def\sup{\displaystyle\!sup}
\let\!inf\inf \def\inf{\displaystyle\!inf}
\let\!cap\cap \def\cap{\displaystyle\!cap}
\let\!max\max \def\max{\displaystyle\!max}
\let\!min\min \def\min{\displaystyle\!min}
\let\!frac\frac \def\frac{\displaystyle\!frac}
\catcode`!=12

\let\oldsection\section
\renewcommand\section{\setcounter{equation}{0}\oldsection}

\allowdisplaybreaks

\DeclareMathOperator*{\dep}{dp}
\DeclareMathOperator{\Li}{Li}
\DeclareMathOperator{\ti}{ti}

\nc\UU{\mbox{\bfseries U}}
\nc\FF{\mbox{\bfseries \itshape F}}
\nc\h{\mbox{\bfseries \itshape h}}\nc\dd{\mbox{d}}
\nc\g{\mbox{\bfseries \itshape g}}
\nc\xx{\mbox{\bfseries \itshape x}}

\nc\gl{{\lambda}}
\nc\gL{{\Lambda}}
\nc\gD{{\Delta}}
\nc\ga{{\alpha}}
\nc\gb{{\beta}}
 \nc{\gam}{{\gamma}}
 \nc{\gG}{{\Gamma}}
 \nc{\vep}{{\varepsilon}}
 \nc{\gs}{{\sigma}}
 \nc{\gth}{{\theta}}
 \nc{\gS}{{\Sigma}}
 \nc{\gf}{{\phi}}
 \nc{\gk}{{\kappa}}
 \nc{\gm}{{\mu}}
 \nc{\gM}{{M}}
 \nc{\gz}{{\ze}}
 \nc{\tg}{{\tilde{g}}}
 \nc{\tgz}{{\tilde{\zeta}}}
 \nc{\gO}{{\Omega}}
 \nc{\sif}{{\mathcal S}}
 \nc{\gt}{{\tau}}
 \nc{\tlt}{{\tilde{t}}}
 \nc{\tgk}{{\tilde{\gk}}}
 \nc{\binn}{{\binom{2n}{n}}}

\def\N{\mathbb{N}}

\def\Q{\mathbb{Q}}
\def\CC{\mathbb{C}}

\def\ze{\zeta}

\def\xx{\left(\frac{1-x}{1+x} \right)}

\nc\divg{{\text{div}}}
\theoremstyle{plain}
\newtheorem{thm}{Theorem}[section]
\newtheorem{lem}[thm]{Lemma}
\newtheorem{cor}[thm]{Corollary}

\newtheorem{pro}[thm]{Proposition}

\theoremstyle{definition}

\newtheorem{re}[thm]{Remark}
\newtheorem{ex}[thm]{Example}

\setlength{\arraycolsep}{0.5mm}

\nc{\myone}{{1}}
\nc{\myO}{{\mathsf O}}
\nc{\myL}{{\mathsf L}}

\let\Re\relax
\let\Im\relax
\DeclareMathOperator\Re{{{Re}}}
\DeclareMathOperator\Im{{{Im}}}

\begin{document}
\title{\bf Ap\'{e}ry-Type Series with Summation Indices of Mixed Parities and Colored Multiple Zeta Values, I} 
\author{
{Ce Xu${}^{a,}$\thanks{Email: cexu2020@ahnu.edu.cn, corresponding author, ORCID 0000-0002-0059-7420.}\ \ and Jianqiang Zhao${}^{b,}$\thanks{Email: zhaoj@ihes.fr, ORCID 0000-0003-1407-4230.}}\\[1mm]
\small a. School of Mathematics and Statistics, Anhui Normal University, Wuhu 241002, PRC\\
\small b. Department of Mathematics, The Bishop's School, La Jolla, CA 92037, USA}

\date{}
\maketitle

\noindent{\bf Abstract.} In this paper, we shall study A\'{e}ry-type series in which the central binomial coefficient appears as part of the summand. Let $b_n=4^n/\binom{2n}{n}$. Let $s_1,\dots,s_d$ be positive integers with $s_1\ge 2$. We consider
the series
\begin{align*}
\sum_{n_1>\cdots>n_d>0} \frac{b_{n_1}}{n_1^{s_1}\cdots n_d^{s_d}}
\end{align*}
and the variants with some or all indices $n_j$ replaced by $2n_j\pm 1$ and
some or all ``$>$'' replaced by ``$\ge$'', provided the series are defined.
We can also replace $b_{n_1}$ by its square in the above series when $s_1\ge 3$. The main result is that all such series
are $\Q$-linear combinations of the real and/or the imaginary parts of colored multiple zeta values of level 4, i.e., multiple polylogarithms evaluated at 4th roots of unity.

\medskip
\noindent{\bf Keywords}: Ap\'{e}ry-type series, colored multiple zeta values, mixed parities, iterated integrals.

\medskip
\noindent{\bf AMS Subject Classifications (2020):} 11M32, 11B65, 11B37, 44A05, 33B30.

\section{Introduction}
In his celebrated proof of irrationality of $\gz(2)$ and $\gz(3)$ in 1979, Ap\'ery used crucially the following two identities
\begin{equation}\label{equ:AperyZ2Z3}
 \gz(2)=3\sum_{n\ge 1} \frac{1}{n^2\binn} \quad\text{and}\quad \gz(3)=\frac52\sum_{n\ge 1} \frac{(-1)^{n-1}}{n^3\binn}.
\end{equation}
Motivated by Ap\'ery's proof, Leshchiner \cite{Leshchiner} generalized these to higher weight Riemann zeta values
and some other analogs. However, no irrationality proof has been found so far for other Riemann zeta values at odd positive
integers greater than 4, although a lot of progress has been made (see e.g., \cite{LY2020,Rivoal2000,Zudilin2001}. In particular, in 2020, Lai and Yu \cite{LY2020} proved that for any small $\varepsilon>0$, the number of irrationals among the following odd zeta values: $\zeta(3),\zeta(5),\zeta(7),\ldots,\zeta(s)$, is at least $(c_0-\varepsilon)\sqrt{s/\log(s)}$, provided $s$ is a sufficiently large odd integer with respect to $\varepsilon$, with constant $c_0 = 1.192507\ldots$.

On the other hand, series generalizing those on the right-hand side of \eqref{equ:AperyZ2Z3},
including odd-indexed variations (see Remark~\ref{re-NeedOddVar}) have appeared in the calculations 
of the $\eps$-expansions of the Feynman diagrams in recent years (see, e.g., \cite{DavydychevDe2001,DavydychevDe2004,JegerlehnerKV2003}).

In the meantime, the frequent and sometimes unexpected appearance of colored multiple zeta values (see \eqref{equ:defnMPL})
in quite a few different branches of mathematics and physics has attracted the attention of many mathematicians and physicists alike.
These numbers, as vast generalizations of Riemann zeta values, are all conjectured to be not only irrational
but also transcendental. One naturally wonders if the multiple sums, which we call Ap\'ery-type series,
that generalize those in \eqref{equ:AperyZ2Z3} can be related to these numbers.
In a series of papers, we will answer some of these questions. As part I of this series, this paper concentrates on
Ap\'ery-type series such as those defined by \eqref{eqn:thm-Mixed-Parity-Any} and \eqref{eqn:thm-binnSquare-Any} in which
the central binomial coefficients appear only on the denominators.
We will show that a large class of these series can be expressed as $\Q$-linear combinations of the real and/or the imaginary parts
of the colored multiple zeta values of level 4, i.e., multiple polylogarithms evaluated at 4th roots of unity, see Thm.~\ref{thm-Mixed-Parity}. Some related results may be found in \cite{Au2020,KWY2007,Sun2015,X2020} and references therein.

\subsection{Notation.}
Let $\N$ be the set of positive integers and $\N_0:=\N\cup \{0\}$.
A finite sequence $\bfs:=(s_1,\ldots, s_d)\in\N^d$ is called a \emph{composition}. We define the weight and the depth of $\bfs$ by
\begin{equation*}
 |\bfs|:=s_1+\cdots+s_d,\quad\text{and}\quad \dep(\bfs):=d,
\end{equation*}
respectively. For any $N$th roots of unity $z_1,\dotsc,z_d$ the \emph{colored multiple zeta values} (CMZVs) of level $N$
are defined by
\begin{equation}\label{equ:defnMPL}
\Li_{\bfs}(\bfz):=\sum_{n_1>\cdots>n_d>0}
\frac{z_1^{n_1}\dots z_d^{n_d}}{n_1^{s_1} \dots n_d^{s_d}},
\end{equation}
which converge if $(s_1,z_1)\ne (1,1)$ (see \cite{Racinet2002} and \cite[Ch. 15]{Zhao2016}), in which case we call $(\bfs;\bfz)$ \emph{admissible}. The multiple zeta values are CMZVs of level 1, namely, $\ze(\bfs):=\Li_{\bfs}(1_d)$ where $1_d$ is the string of 1's with $d$ repetitions. Moreover, CMZVs can be expressed using Chen's iterated integrals
\begin{equation}\label{czeta-itIntExpression}
\Li_{\bfs}(\bfz)=\int_0^1 \ta^{s_1-1}\tx_{\xi_1}\cdots\ta^{s_d-1}\tx_{\xi_d},
\end{equation}
where $\xi_j:=\prod_{i=1}^j z_i^{-1}$, $\ta:=dt/t$ and $\tx_\xi:=dt/(\xi-t)$
for any $N$th roots of unity $\xi$, see \cite[Sec.~2.1]{Zhao2016} for a brief summary of this theory. The theory of iterated integrals was developed first by K.T. Chen in the 1960's. It has played important roles in the study of algebraic topology and algebraic geometry in the past half century. Its simplest form is
\begin{align*}
\int_0^1 f_1(t)dtf_{2}(t)dt\cdots f_p(t)dt
=&\, \int_0^1 f_1(t)dt\circ f_{2}(t)dt\circ \cdots \circ f_p(t)dt \\
:=&\, \int\limits_{1>t_1>\cdots>t_p>0}f_1(t_1)f_{2}(t_{2})\cdots f_p(t_p)dt_1dt_2\cdots dt_p.
\end{align*}
One can extend these to iterated integrals over any piecewise smooth path on the complex plane via pull-backs.
We refer the interested reader to Chen's original work \cite{KTChen1971,KTChen1977} for more details.

\subsection{Akhilesh's result.}
In \cite{Akhilesh1,Akhilesh} Akhilesh discovered some very important and surprising connections between
MZVs and the following Ap\'ery-type series (which he calls multiple Ap\'ery-like sums and which are normalized slightly differently here)
\begin{equation}\label{defn:gs}
\gs(\bfs;x):=\sum_{n_1> n_2>\cdots>n_d>0} {\binom{2n_1}{n_1}}^{-1} \frac{(2x)^{2n_1}}{(2n_1)^{s_1}\cdots (2n_d)^{s_d}}.
\end{equation}
His ingenious idea is to study the $n$-tails (and more generally, double tails) of such series.
We reformulate one of his key results as follows to make it more transparent. Set
\begin{align}\label{defn-g}
g_s(t)=
\left\{
 \begin{array}{ll}
 \tan t\, dt, \quad & \hbox{if $s=1$;} \\
 dt \circ (\cot t\, dt)^{s-2}\circ dt, \quad & \hbox{if $s\ge 2$,}
 \end{array}
\right.
\end{align}
and their non-trigonometric counterpart
\begin{equation}\label{defn-G}
 G_{s}(t)=
 \left\{
\begin{array}{ll}
 \om_{2} & \hbox{if $s=1$;} \\
 \om_{1}\om_0^{s-2}\om_{1} \qquad & \hbox{if $s\ge 2$,}
 \end{array}
 \right.
\end{equation}
where $\om$'s are defined by \eqref{defn:oms}. Further, we set $\binom{0}{0}=1$,
\begin{align*}
 b_n(x)=4^n{\binn}^{-1}x^{2n}\quad \text{and}\quad b_n= b_n(1)=4^n{\binn}^{-1}\quad \forall n\ge 0.
\end{align*}

\begin{thm} \label{thm-gs-Akhilesh} \emph{(\cite[Thm.~4]{Akhilesh})}
For all $n\in\N_0$, $\bfs=(s_1,\dots,s_d)\in\N^d$ we have
\begin{align*}
\gs(\bfs;\sin y)_n:=&\, \sum_{n_1 > \cdots > n_d>n} \frac{b_{n_1}(\sin y)}{(2n_1)^{s_1}\cdots (2n_d)^{s_d}}
=\frac{d}{dy} \int_0^y g_{s_1}\circ \cdots \circ g_{s_d}\circ b_{n}(\sin t) \,dt,
\end{align*}
where $y\in(-\pi/2,\pi/2)$ if $s_1=1$ and $y\in[-\pi/2,\pi/2]$ if $s_1\ge 2$. Using non-trigonometric 1-forms, we have
for all $x\in(-1,1)$
\begin{align*}
\gs(\bfs;x)_n:=&\, \sum_{n_1 > \cdots > n_d>n} \frac{b_{n_1}(x)}{(2n_1)^{s_1}\cdots (2n_d)^{s_d}}
=\sqrt{1-x^2}\frac{d}{dx} \int_0^x G_{s_1}\circ \cdots \circ G_{s_d}\circ b_{n}(t) \om_1.
\end{align*}
\end{thm}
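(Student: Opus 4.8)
\medskip
\noindent{\bf Proof strategy.}\quad The plan is to establish the trigonometric identity first, by induction on the weight $|\bfs|$, and then deduce the non-trigonometric one from it by the substitution $x=\sin y$. For that reduction: under $t=\arcsin u$ the $1$-forms of \eqref{defn-g} transform exactly into those of \eqref{defn-G} (this is what the definitions in \eqref{defn:oms} are built to do, $\tan t\,dt$, $\cot t\,dt$, $dt$ and $b_n(\sin t)\,dt$ becoming $\om_2$, $\om_0$, $\om_1$ and $b_n(t)\,\om_1$), so by the change-of-variables invariance of Chen's iterated integrals one has $\int_0^y g_{s_1}\circ\cdots\circ g_{s_d}\circ b_n(\sin t)\,dt=\int_0^{x}G_{s_1}\circ\cdots\circ G_{s_d}\circ b_n(t)\,\om_1$ when $x=\sin y$; combining this with $\frac{d}{dy}=\cos y\,\frac{d}{dx}=\sqrt{1-x^2}\,\frac{d}{dx}$ and $\gs(\bfs;\sin y)_n=\gs(\bfs;x)_n$ yields the second identity from the first. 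Accordingly I set $\mathcal I^{\bfs}_n(y):=\int_0^y g_{s_1}\circ\cdots\circ g_{s_d}\circ b_n(\sin t)\,dt$ and $\mathcal D^{\bfs}_n(y):=\frac{d}{dy}\mathcal I^{\bfs}_n(y)$, and aim to prove $\mathcal D^{\bfs}_n(y)=\gs(\bfs;\sin y)_n$.

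Two elementary facts will drive the induction. On the series side, termwise differentiation, using $\frac{d}{dy}b_{n_1}(\sin y)=2n_1\cot y\,b_{n_1}(\sin y)$, gives for every $s_1\ge 1$
\[
\frac{d}{dy}\,\gs((s_1,s_2,\dots,s_d);\sin y)_n=\cot y\cdot\gs((s_1-1,s_2,\dots,s_d);\sin y)_n ,
\]
where for $s_1=1$ the right-hand side is read with a $0$ in the first slot. On the integral side, ``peeling off the leftmost $1$-form'' of an iterated integral gives $\mathcal D^{(1,s_2,\dots,s_d)}_n(y)=\tan y\cdot\mathcal I^{(s_2,\dots,s_d)}_n(y)$ and, for $s_1\ge 2$, $\frac{d}{dy}\mathcal D^{(s_1,\dots,s_d)}_n(y)=\cot y\cdot\mathcal D^{(s_1-1,s_2,\dots,s_d)}_n(y)$ (the subcase $s_1=2$ being a consequence of the first of these). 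Granting all this, the inductive step for $s_1\ge 2$ is immediate: $\frac{d}{dy}\mathcal D^{\bfs}_n$ and $\frac{d}{dy}\gs(\bfs;\sin\cdot)_n$ are both equal to $\cot y$ times the weight $|\bfs|-1$ quantities associated with $(s_1-1,s_2,\dots,s_d)$, which coincide by the inductive hypothesis; since $\mathcal D^{\bfs}_n(0)=\gs(\bfs;0)_n=0$ (the $\cot y$ blow-up is harmless, the other factor being $O(y^2)$ near $0$), integrating from $0$ closes this case.

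The case $s_1=1$ is where the real content lies. Writing $\bfs=(1,\bfs')$, the relation $\mathcal D^{(1,\bfs')}_n=\tan y\cdot\mathcal I^{\bfs'}_n$ reduces the claim to
\[
\mathcal I^{\bfs'}_n(y)=\cot y\cdot\gs((1,\bfs');\sin y)_n .
\]
Both sides vanish at $y=0$, so I would compare derivatives. The left-hand derivative is $\mathcal D^{\bfs'}_n=\gs(\bfs';\sin\cdot)_n$ by the inductive hypothesis, while differentiating the right-hand side, using $\frac{d}{dy}\gs((1,\bfs');\sin y)_n=\cot y\,\gs((0,\bfs');\sin y)_n$, produces $\cot^2 y\,\gs((0,\bfs');\sin y)_n-\csc^2 y\,\gs((1,\bfs');\sin y)_n$. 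Thus the whole theorem comes down to the ``level-lowering'' identity
\[
\gs(\bfs';\sin y)_n=\cot^2 y\cdot\gs((0,\bfs');\sin y)_n-\csc^2 y\cdot\gs((1,\bfs');\sin y)_n ,
\]
and this is exactly the point where the central binomial coefficient enters decisively: from the standard recursion $(2k-1)b_k=2k\,b_{k-1}$ one gets $b_k(x)-x^2\,b_{k-1}(x)=(2k)^{-1}b_k(x)$, whence
\begin{align*}
\gs((0,\bfs');x)_n-\gs((1,\bfs');x)_n
&=\sum_{n_1>n_2>\cdots>n_d>n}\frac{(2n_1-1)b_{n_1}}{2n_1}\cdot\frac{x^{2n_1}}{(2n_2)^{s_2}\cdots(2n_d)^{s_d}}\\
&=x^2\sum_{m\ge n_2>n_3>\cdots>n_d>n}\frac{b_m(x)}{(2n_2)^{s_2}\cdots(2n_d)^{s_d}},
\end{align*}
and separating the last sum into its $m=n_2$ and $m>n_2$ parts produces exactly $x^2\big(\gs(\bfs';x)_n+\gs((0,\bfs');x)_n\big)$; with $x=\sin y$ and $1-x^2=\cos^2 y$ this rearranges to the displayed identity. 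Feeding it back gives $\mathcal D^{(1,\bfs')}_n=\tan y\cot y\,\gs((1,\bfs');\sin\cdot)_n=\gs((1,\bfs');\sin\cdot)_n$, and the base case $|\bfs|=1$ (which forces $\bfs=(1)$) is the same argument with $\int_0^y b_n(\sin t)\,dt$ in place of $\mathcal I^{\bfs'}_n$ and needs no inductive hypothesis.

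I expect this $s_1=1$ step to be the main obstacle: one must foresee the closed form $\cot y\cdot\gs((1,\bfs');\sin y)_n$ for the auxiliary integral $\mathcal I^{\bfs'}_n$, and then recognise that matching derivatives forces the non-obvious three-term relation above, whose proof hinges on the recursion defining $b_k$. The rest is routine bookkeeping: all the series converge locally uniformly on $y\in(-\pi/2,\pi/2)$, which legitimises termwise differentiation there, and for $s_1\ge 2$ they converge uniformly on $[-\pi/2,\pi/2]$ by the Weierstrass $M$-test (as $b_n=4^n/\binom{2n}{n}\sim\sqrt{\pi n}$), so the identity extends to $y=\pm\pi/2$ by continuity.
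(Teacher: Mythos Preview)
The paper does not actually give its own proof of this theorem: it is quoted verbatim as \cite[Thm.~4]{Akhilesh} and then used as a black box in the proofs of the variants in \S2--\S3. So there is no ``paper's proof'' to compare against.

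That said, your argument is a correct, self-contained proof. The induction on $|\bfs|$ with the two peeling identities for $\mathcal D^{\bfs}_n$ is set up cleanly, including the $s_1=2$ subcase: one has $\frac{d}{dy}\mathcal D^{(2,\bfs')}_n=\mathcal I^{\bfs'}_n$ directly from $g_2=dt\circ dt$, while $\cot y\cdot\mathcal D^{(1,\bfs')}_n=\cot y\cdot\tan y\cdot\mathcal I^{\bfs'}_n=\mathcal I^{\bfs'}_n$, so the claimed relation holds. The heart of the matter --- the three-term identity
\[
(1-x^2)\,\gs((0,\bfs');x)_n-\gs((1,\bfs');x)_n=x^2\,\gs(\bfs';x)_n
\]
coming from the recursion $(2k-1)b_k=2k\,b_{k-1}$ --- is verified correctly, and the boundary check at $y=0$ is fine since the series there are $O(y^{2(n+d)})$ so the $\cot y$ factor causes no trouble. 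The passage from the trigonometric to the non-trigonometric form via $x=\sin y$ is routine.

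In fact your scheme is precisely how the paper itself proves its variants Thm.~\ref{thm-1stVariant} and Thm.~\ref{thm-2ndVariant}: establish a depth-$1$ seed (there obtained by invoking the present theorem), then repeatedly multiply by $\cot y$ and integrate to climb in $s_1$, and iterate over depth. You have simply carried this out for $\gs$ directly, replacing the external citation by the recursion-based level-lowering identity. This is a genuine contribution over what the paper contains.
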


We will again call the sum $|\bfs|:=s_1+\dots+s_d$ the weight of the series $\gs(\bfs;\sin y)$ and $d$ the depth.

\subsection{Main results.}
In Thm~\ref{thm-1stVariant} and Thm~\ref{thm-2ndVariant} we will show that the tails of the series
\begin{align*}
\gt^\star(\bfs;x)_n:= &\, \sum_{n_1\ge \cdots\ge n_d\ge n} {\binom{2n_1}{n_1}}^{-1} \frac{(2x)^{2n_1}}{(2n_1+1)^{s_1}\cdots (2n_d+1)^{s_d}},\\
\chi(\bfs;x)_n:=&\,\sum_{n_1> \cdots>n_d>n} {\binom{2n_1}{n_1}}^{-1} \frac{(2x)^{2n_1}}{(2n_1-1)^{s_1}\cdots (2n_d-1)^{s_d}}
\end{align*}
can be written as iterated integrals similar to the one in Thm~\ref{thm-gs-Akhilesh}. As corollaries, we show that the series
$\gs(\bfs;1)_0$, $\gt^\star(\bfs;1)_0$, $\chi(\bfs;1)_0$ and more general similar series defined
by \eqref{eqn:thm-Mixed-Parity-Any} and \eqref{eqn:thm-binnSquare-Any} with summation indices of any parity pattern
can be expressed as $\Q$-linear combinations of the real and/or the imaginary parts of CMZVs of level 4.

We also consider other Ap\'ery-type series similar to the above by using the
square of central binomial coefficients such as
\begin{equation}\label{defn:gsSq}
\begin{split}
\gs^{(2)}(\bfs):=&\, \sum_{n_1>\cdots>n_d>0} {\binom{2n_1}{n_1}}^{-2} \frac{16^{n_1}}{(2n_1)^{s_1}\cdots (2n_d)^{s_d}},\\
\gt^{\star,(2)}(\bfs):=&\, \sum_{n_1\ge\cdots\ge n_d\ge0} {\binom{2n_1}{n_1}}^{-2} \frac{16^{n_1}}{(2n_1+1)^{s_1}\cdots (2n_d+1)^{s_d}}
\end{split}
\end{equation}
and show that they also lie in the $\Q$-vector space of CMZVs of level 4.

\section{First variant with odd summation indices}
In this section, we consider a variation of the Ap\'ery-type series studied in \cite{Akhilesh} by restricting
the summation indices to odd numbers only and replacing strict inequalities among them by
non-strict ones. Concerning this we have the next well-known result. Define
\begin{alignat}{5}\label{defn:oms}
\om_0:=&\, \frac{dt}{t}, \quad &\,\om_1:=&\,\frac{dt}{\sqrt{1-t^2}}, \quad &\,\om_2:=&\,\frac{t\,dt}{1-t^2}, \\
\om_3:=&\, \frac{dt}{t\sqrt{1-t^2}}, \quad &\,\om_5:=&\,\frac{t\, dt}{\sqrt{1-t^2}}, \quad &\,\om_8:=&\,\frac{dt}{1-t^2}.
\end{alignat}

\begin{lem}
For all $d\in\N$ and $\bfs=(s_1,\dots,s_d)\in\N^d$, we have
\begin{align*}
\ti_\bfs(x)_n:=\sum_{n_1>\cdots>n_d\ge n} \frac{x^{2n_1+1}}{(2n_1+1)^{s_1}\cdots (2n_d+1)^{s_d}}
=\int_0^x \om_0^{s_1-1}\om_2\cdots \om_0^{s_{d-1}-1}\om_2\om_0^{s_d-1} (t^{2n} \om_8).
\end{align*}
\end{lem}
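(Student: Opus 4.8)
The plan is to prove the identity by induction on the weight $|\bfs|=s_1+\dots+s_d$, comparing the two sides through their $x$-derivatives and the recursive structure of Chen's iterated integrals. Write $T_\bfs(x)$ for the left-hand series $\ti_\bfs(x)_n$ and $R_\bfs(x)$ for the right-hand iterated integral. Both are analytic on $(-1,1)$: the series $T_\bfs$ converges there because for fixed $n_1$ the tuples $n_1>\dots>n_d\ge n$ number $\binom{n_1-n}{d-1}$, which grows only polynomially in $n_1$; and both vanish at $x=0$, since an iterated integral from $0$ to $0$ is empty and every monomial of $T_\bfs$ has degree $\ge 2n+1$. Hence it suffices to show that $T_\bfs$ and $R_\bfs$ satisfy one and the same first-order recursion in $x$.

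For $R_\bfs$ I would peel off the outermost $1$-form using $\frac{d}{dx}\int_0^x (h_1(t)\,dt)\,\eta_2\cdots\eta_p=h_1(x)\int_0^x\eta_2\cdots\eta_p$. This gives, for $|\bfs|\ge2$, that $R_\bfs'(x)=\tfrac1x R_{(s_1-1,s_2,\dots,s_d)}(x)$ when $s_1\ge2$ (strip one $\om_0$) and $R_\bfs'(x)=\tfrac{x}{1-x^2}R_{(s_2,\dots,s_d)}(x)$ when $s_1=1$ (strip the leading $\om_2$, since $\om_0^{0}$ is empty), while in the base case $R_{(1)}'(x)=\frac{d}{dx}\int_0^x \tfrac{t^{2n}}{1-t^2}\,dt=\tfrac{x^{2n}}{1-x^2}$. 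Differentiating the series $T_\bfs$ term by term yields exactly the same formulas: the $s_1\ge2$ case is immediate; $T_{(1)}'(x)=\sum_{n_1\ge n}x^{2n_1}=\tfrac{x^{2n}}{1-x^2}$; and for $s_1=1$, $d\ge2$ one gets $T_\bfs'(x)=\sum_{n_1>\dots>n_d\ge n}x^{2n_1}\prod_{i=2}^d(2n_i+1)^{-s_i}$, which equals $\tfrac{x}{1-x^2}T_{(s_2,\dots,s_d)}(x)$ upon writing $\tfrac{x}{1-x^2}=\sum_{k\ge0}x^{2k+1}$ and substituting $n_1=n_2+k+1$.

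With these matching recursions the induction closes: the base case $\bfs=(1)$ holds since $T_{(1)}$ and $R_{(1)}$ have equal derivatives and both vanish at $0$; and since every recursion step passes to a composition of strictly smaller weight, the inductive hypothesis identifies the two lower-weight functions, so $T_\bfs'=R_\bfs'$ on $(-1,1)$ together with $T_\bfs(0)=R_\bfs(0)=0$ forces $T_\bfs=R_\bfs$. I expect the only step needing genuine care to be the case $s_1=1$, $d\ge2$, where stripping $\om_2$ must be matched against the convolution identity $\tfrac{x}{1-x^2}\sum_{n_2>\dots>n_d\ge n}\tfrac{x^{2n_2+1}}{(2n_2+1)^{s_2}\cdots(2n_d+1)^{s_d}}=\sum_{n_1>n_2>\dots>n_d\ge n}\tfrac{x^{2n_1}}{(2n_2+1)^{s_2}\cdots(2n_d+1)^{s_d}}$; everything else is routine bookkeeping with the $1$-forms $\om_0,\om_2,\om_8$. (An alternative, slightly more computational route expands $\om_8$ and $\om_2$ as geometric series and integrates term by term from the inside out, arriving at the same identities; and if the value at $x=1$ is wanted when $s_1\ge2$, it follows by continuity.)
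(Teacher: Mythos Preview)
Your proof is correct and is precisely the kind of argument the paper has in mind: the paper's own proof consists of the single sentence ``This follows easily by direct computation,'' and your induction on $|\bfs|$ via matching $x$-derivatives is the standard way to carry that computation out. The alternative you mention in passing (expanding $\om_8$ and $\om_2$ as geometric series and integrating from the inside out) is equally valid and amounts to the same recursion read in the opposite direction.
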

\begin{proof}
This follows easily by direct computation.
\end{proof}

For all $n\in\N_0$ and $\bfs=(s_1,\dots,s_d)\in\N^d$ we define
\begin{align*}
\gt(\bfs;x)_n:=&\, \sum_{n_1>\cdots>n_d\ge n} {\binom{2n_1}{n_1}}^{-1}
\frac{(2x)^{2n_1}}{(2n_1+1)^{s_1}\cdots (2n_d+1)^{s_d}},\\
\gt^\star(\bfs;x)_n:=&\, \sum_{n_1 \ge \cdots \ge n_d\ge n} {\binom{2n_1}{n_1}}^{-1}
\frac{(2x)^{2n_1}}{(2n_1+1)^{s_1}\cdots (2n_d+1)^{s_d}}.
\end{align*}

\begin{thm}
For all $d\in\N$ and $\bfs=(s_1,\dots,s_d)\in\N^d$, the tail
\begin{align*}
\gt(\bfs;1/2)_n
=&\, \int_0^{1/2} \frac{4t}{\sqrt{1-4t^2}}\om_0^{s_1-1}\om_2\cdots \om_0^{s_{d-1}-1}\om_2\om_0^{s_d-1} (t^{2n} \om_8).
\end{align*}
\end{thm}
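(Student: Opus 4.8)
The plan is to derive the formula directly from the preceding Lemma by splitting off the single outermost $1$-form and then reducing to a classical Beta integral. Write $\eta$ for the leading form $4t\,dt/\sqrt{1-4t^2}$. By the defining property of Chen's iterated integral (peeling off the outermost differential),
\begin{equation*}
\int_0^{1/2}\eta\,\om_0^{s_1-1}\om_2\cdots\om_0^{s_{d-1}-1}\om_2\om_0^{s_d-1}(t^{2n}\om_8)
=\int_0^{1/2}\frac{4t}{\sqrt{1-4t^2}}\left(\int_0^{t}\om_0^{s_1-1}\om_2\cdots\om_0^{s_d-1}(u^{2n}\om_8)\right)dt,
\end{equation*}
and by the Lemma the inner iterated integral is exactly $\ti_\bfs(t)_n$. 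Thus the theorem is equivalent to the identity $\gt(\bfs;1/2)_n=\int_0^{1/2}\eta(t)\,\ti_\bfs(t)_n$.

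Next I would substitute the defining series $\ti_\bfs(t)_n=\sum_{n_1>\cdots>n_d\ge n}t^{2n_1+1}\big/\prod_{j=1}^d(2n_j+1)^{s_j}$ and interchange the summation with the outer integration against $\eta$. This is legitimate: the series has nonnegative coefficients and converges uniformly on $[0,1/2]$ (its radius of convergence is at least $1$, since the coefficient of $t^{2N+1}$ is $O(N^{d-1-s_1})$ hence polynomially bounded), while $\eta$ is positive on $(0,1/2)$ with only an integrable $(1/2-t)^{-1/2}$ singularity at the right endpoint; so monotone (or dominated) convergence applies. After the interchange the claim becomes a single one-variable evaluation carried out term by term: for each $n_1\ge 0$ the integral $\int_0^{1/2}\eta(t)\,t^{2n_1+1}$ must equal precisely the reciprocal central binomial weight that multiplies $\prod_j(2n_j+1)^{-s_j}$ in the definition of $\gt$.

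That integral is handled by the substitution $u=4t^2$ (equivalently $t=\tfrac12\sin\theta$), which turns it into a Beta integral of the shape $c\cdot 4^{-n_1}\int_0^1 u^{\,n_1+c'}(1-u)^{-1/2}\,du$, i.e.\ a Wallis integral proportional to $\int_0^{\pi/2}\sin^{k}\theta\,d\theta$; the standard evaluation $B(m+1,\tfrac12)=2\cdot4^{m}/\big((2m+1)\binom{2m}{m}\big)$ then yields the required central binomial factor. Feeding this back, the factor $\binn^{-1}$ (with $n=n_1$) appears in front of each summand while the factors $(2n_j+1)^{-s_j}$ are left untouched, and the multiple sum collapses to the definition of $\gt(\bfs;1/2)_n$.

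The part that needs genuine care is this middle step: one must confirm that $\eta$ is integrable up to $t=1/2$ and that termwise integration is valid there (which follows from the uniform convergence of $\ti_\bfs(\cdot)_n$ on the closed interval together with the integrable endpoint singularity of $\eta$), and then bookkeep the powers of $2$ and the substitution Jacobian in the Beta/Wallis evaluation so that exactly the stated coefficient, with no stray constants, emerges. A more pedestrian alternative would induct on the depth $d$ and the weight, peeling the blocks $\om_0^{s_1-1}\om_2$ off one at a time and reducing $\gt(\bfs;1/2)_n$ to a lower-depth tail of the same type; but invoking the Lemma as a black box for the inner $d$ forms is the efficient route.
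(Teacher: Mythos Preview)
Your overall plan---use the Lemma to recognize the inner block as $\ti_{\bfs}(t)_n$, then reduce to a single Beta/Wallis evaluation carried out term by term---is exactly the paper's approach. However, your parsing of the right-hand side is off by one $1$-form, and this makes the key evaluation fail.

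In the paper's notation the prefactor $\dfrac{4t}{\sqrt{1-4t^2}}$ carries no $dt$: it is a \emph{function} multiplying into the first $1$-form of the string $\om_0^{s_1-1}\om_2\cdots\om_0^{s_d-1}(t^{2n}\om_8)$, not a separate leading $1$-form $\eta$. Equivalently the identity being asserted is
\[
\gt(\bfs;1/2)_n
=\int_0^{1/2}\frac{4t}{\sqrt{1-4t^2}}\Big(\frac{d}{dt}\ti_\bfs(t)_n\Big)\,dt,
\]
which is precisely how the paper argues. Under your reading (peeling off $\eta=4t\,dt/\sqrt{1-4t^2}$ as its own form) you are instead computing $\int_0^{1/2}\frac{4t}{\sqrt{1-4t^2}}\,\ti_\bfs(t)_n\,dt$, and the termwise integral becomes
\[
\int_0^{1/2}\frac{4t}{\sqrt{1-4t^2}}\,t^{2n_1+1}\,dt
\;\overset{t=\frac12\sin\theta}{=}\;
\frac{1}{2\cdot4^{n_1}}\int_0^{\pi/2}\sin^{2n_1+2}\theta\,d\theta,
\]
an \emph{even} Wallis integral, hence a rational multiple of $\pi$; it cannot equal $\binom{2n_1}{n_1}^{-1}$ (already at $n_1=0$ you get $\pi/8\ne 1$). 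With the correct reading, $\frac{d}{dt}\ti_\bfs(t)_n$ contributes $t^{2n_1}$ rather than $t^{2n_1+1}$, and the resulting \emph{odd} Wallis integral gives $\binom{2n_1}{n_1}^{-1}(2n_1+1)^{-1}$, i.e.\ exactly the Beta value $B(n_1+1,\tfrac12)$ you quote; the extra factor $(2n_1+1)^{-1}$ restores the power of the leading index removed by the derivative. With this one correction your argument coincides with the paper's, and your remarks on the legitimacy of termwise integration are fine.
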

\begin{proof}
Note that
\begin{align*}
\int_0^{1/2} \frac{4t}{\sqrt{1-4t^2}} t^{2n} dt
=&\, \int_0^{1/4} \frac{2t^{n}}{\sqrt{1-4t}} dt
=\frac{\Gamma( n+1)^2}{\Gamma(2n+2)}
={\binn}^{-1}\frac{1}{2n+1}.
\end{align*}
Thus
\begin{align*}
 &\,\sum_{n_1>\cdots>n_d\ge n} {\binom{2n_1}{n_1}}^{-1}\frac{1}{(2n_1+1)^{s_1}(2n_2+1)^{s_2}\cdots (2n_d+1)^{s_d}}\\
=&\,\int_0^{1/2} \frac{4t}{\sqrt{1-4t^2}}
\sum_{n_1>\cdots>n_d\ge n} \frac{ t^{2n_1} dt}{(2n_1+1)^{s_1-1}(2n_2+1)^{s_2}\cdots (2n_d+1)^{s_d}}\\
=&\,\int_0^{1/2} \frac{4t}{\sqrt{1-4t^2}} \bigg(\frac{d}{dt} \ti_\bfs(t)_n\bigg) dt\\
=&\,\int_0^{1/2} \frac{4t}{\sqrt{1-4t^2}} \om_0^{s_1-1}\om_2\cdots \om_0^{s_{d-1}-1}\om_2\om_0^{s_d-1} (t^{2n} \om_8),
\end{align*}
as desired.
\end{proof}

It turns out that the star version $\gt^\star$ behaves better. To study this,
we will extend Chen's iterated integrals by combining 1-forms and functions as follows.
For any $r\in\N$, 1-forms $f_1(t)\,dt,\dots,f_{r+1}(t)\,dt$ and functions $F_1(t),\dots,F_r(t)$, we define
recursively
\begin{align*}
&\, \int_0^1 \big( f_1(t)\,dt+F_1(t)\big)\circ \cdots\circ \big( f_r(t)\,dt+F_r(t)\big)\circ f_{r+1}(t)\,dt\\
:=&\, \int_0^1 \big( f_1(t)\,dt+F_1(t)\big)\circ \cdots\circ \big( f_{r-1}(t)\,dt+F_{r-1}(t)\big)\circ f_r(t)\,dt \circ f_{r+1}(t)\,dt\\
+&\, \int_0^1 \big( f_1(t)\,dt+F_1(t)\big)\circ \cdots\circ \big( f_{r-1}(t)\,dt+F_{r-1}(t)\big)\circ \big(F_r(t)f_{r+1}(t)\big)\,dt.
\end{align*}

We now set the 1-forms
\begin{align}\label{defn-h}
h_{s}(t)=
\left\{
 \begin{array}{ll}
 2\csc 2t\, dt, & \hbox{if $s=1$;} \\
 \csc t\,dt \circ (\cot t\, dt)^{s-2}\circ \csc t\, dt, \qquad& \hbox{if $s\ge 2$,}
 \end{array}
\right.
\end{align}
and their non-trigonometric counter part
\begin{align}\label{defn-H}
H_{s}(t)=
\left\{
 \begin{array}{ll}
\om_{20}:=\om_0+\om_2, \qquad & \hbox{if $s=1$;} \\
 \om_3 \om_0^{s-2} \om_3,& \hbox{if $s\ge 2$.}
 \end{array}
\right.
\end{align}

\begin{thm} \label{thm-1stVariant}
For all $n\in\N_0$, $\bfs=(s_1,\dots,s_d)\in\N^d$ we have the tail
\begin{align}\label{equ-thm-MtV-ItInt}
 \gt^\star(\bfs;\sin y)_n
 = & \, \sum_{n_1\ge \cdots \ge n_d\ge n}
\frac{b_{n_1}(\sin y)}{(2n_1+1)^{s_1}\cdots (2n_d+1)^{s_d}}
=\frac{d}{dy} \int_0^y h_{s_1}\circ \cdots h_{s_d}\circ b_{n}(\sin t) \,dt.
\end{align}
Hence
\begin{align*}
\gt^\star(\bfs;x)_n
=&\, \sqrt{1-x^2} \frac{d}{dx} \int_0^x H_{s_1}\circ \cdots \circ H_{s_d}\circ b_{n}(t) \om_1.
\end{align*}
\end{thm}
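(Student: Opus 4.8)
The plan is to prove the trigonometric identity \eqref{equ-thm-MtV-ItInt} by induction on the depth $d$, and then to deduce the non-trigonometric formula by the substitution $u=\sin t$. Throughout put $x=\sin y$, taking $y\in(-\pi/2,\pi/2)$ in general and allowing $y=\pm\pi/2$ when $s_1\ge 2$.

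The whole argument runs on two elementary one-variable identities. The first, which I will call $(\star)$, is
\[
\int_0^y b_k(\sin t)\,dt=\sin y\cos y\sum_{j\ge k}\frac{b_j(\sin y)}{2j+1}\qquad(k\in\N_0),
\]
and the second is the immediate $\int_0^y\cos t\,b_k(\sin t)\,dt=\frac{\sin y}{2k+1}\,b_k(\sin y)$. To prove $(\star)$ I would differentiate in $y$: the left side gives $b_k(\sin y)$, and differentiating the right side, then using $\frac{d}{dy}b_j(\sin y)=\frac{2j\cos y}{\sin y}b_j(\sin y)$, the identity $\cos 2y+2j\cos^2 y=(2j+1)-2(j+1)\sin^2 y$, and the ratio $b_{j+1}(\sin y)=\frac{2(j+1)\sin^2 y}{2j+1}b_j(\sin y)$, collapses it to the telescoping sum $\sum_{j\ge k}(b_j(\sin y)-b_{j+1}(\sin y))=b_k(\sin y)$; both sides vanish at $y=0$, so $(\star)$ follows. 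Since $b_m(\sin t)\le b_m(\sin y)$ on $[0,y]$ for $y<\pi/2$, one may interchange $\sum$ and $\int$, and $(\star)$ together with the second identity upgrade to: for every composition $\bfr=(r_1,\dots,r_e)$,
\[
\int_0^y\gt^\star(\bfr;\sin t)_m\,dt=\sin y\cos y\,\gt^\star\big((1,\bfr);\sin y\big)_m ,\qquad \int_0^y\cos t\,\gt^\star(\bfr;\sin t)_m\,dt=\sin y\,\gt^\star\big((r_1+1,r_2,\dots,r_e);\sin y\big)_m .
\]

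The key claim, to be proved by induction on $d$, is
\[
\int_0^y h_{s_1}\circ\cdots\circ h_{s_d}\circ b_n(\sin t)\,dt=\int_0^y\gt^\star(\bfs;\sin t)_n\,dt ,
\]
from which \eqref{equ-thm-MtV-ItInt} is immediate by the fundamental theorem of calculus. Write $\bfs=(s_1,\bfs')$ (with $\bfs'=\varnothing$, i.e.\ $d=1$, handled by the convention $\gt^\star(\varnothing;\cdot)_n:=b_n(\cdot)$). By the induction hypothesis the inner iterated integral $\int_0^t h_{s_2}\circ\cdots\circ h_{s_d}\circ b_n(\sin u)\,du$ equals $\int_0^t\gt^\star(\bfs';\sin u)_n\,du$, which by the first upgraded identity is $\sin t\cos t\,\gt^\star((1,\bfs');\sin t)_n$. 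If $s_1=1$, then $h_1=\frac{dt}{\sin t\cos t}$ cancels the factor $\sin t\cos t$, leaving $\int_0^y\gt^\star((1,\bfs');\sin t)_n\,dt$, as desired. If $s_1\ge 2$, write $h_{s_1}=\csc t\,dt\circ(\cot t\,dt)^{s_1-2}\circ\csc t\,dt$ and evaluate from the inside out: the innermost $\csc t\,dt$ turns $\sin t\cos t\,\gt^\star((1,\bfs');\sin t)_n$ into $\cos t\,\gt^\star((1,\bfs');\sin t)_n$, which by the second upgraded identity integrates to $\sin t\,\gt^\star((2,\bfs');\sin t)_n$; each of the $s_1-2$ factors $\cot t\,dt$ (with $\cot t=\cos t/\sin t$) then raises the leading exponent by one in exactly the same way, producing $\sin t\,\gt^\star((s_1,\bfs');\sin t)_n$; finally the outermost $\csc t\,dt$ together with $\int_0^y$ strips the last factor $\sin t$ and gives $\int_0^y\gt^\star((s_1,\bfs');\sin t)_n\,dt=\int_0^y\gt^\star(\bfs;\sin t)_n\,dt$. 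This closes the induction.

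For the non-trigonometric statement I would pull back along the diffeomorphism $t\mapsto u=\sin t$ of $(-\pi/2,\pi/2)$ onto $(-1,1)$, under which $\csc t\,dt\mapsto\om_3$, $\cot t\,dt\mapsto\om_0$, $2\csc 2t\,dt\mapsto\om_0+\om_2=\om_{20}$ and $b_n(\sin t)\,dt\mapsto b_n(u)\,\om_1$; hence $h_s\mapsto H_s$ and $\int_0^y h_{s_1}\circ\cdots\circ b_n(\sin t)\,dt=\int_0^x H_{s_1}\circ\cdots\circ H_{s_d}\circ b_n(u)\,\om_1$. Since $\frac{d}{dy}=\cos y\,\frac{d}{dx}=\sqrt{1-x^2}\,\frac{d}{dx}$, differentiating in $y$ yields precisely the second displayed formula of the theorem. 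The one delicate point is the behavior at $y=\pi/2$ when $s_1\ge 2$: there $\sum_j b_j(\sin t)/(2j+1)$ blows up like $(\cos t)^{-1}$, but it always occurs multiplied by a compensating $\cos t$, so all the integrands stay bounded and the identities extend by continuity to the closed interval. The only real obstacle is spotting the transfer identity $(\star)$; once it is in place, the induction is just a careful bookkeeping of the one-forms composing $h_{s_1}$.
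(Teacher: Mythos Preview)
Your argument is correct and, at the structural level, parallels the paper's proof: both establish the depth-$1$ base identity and then iterate by multiplying by $\cot$ (resp.\ $\csc$) and integrating to raise the leading exponent (resp.\ the depth). The substantive difference is in how the key identity $(\star)$ --- equivalently the paper's \eqref{equ-V3Use} --- is obtained. The paper deduces it from Akhilesh's Theorem~\ref{thm-gs-Akhilesh} via the binomial relation $\binom{2k+2}{k+1}^{-1}\frac{1}{k+1}=\frac12\binom{2k}{k}^{-1}\frac{1}{2k+1}$, whereas you prove it directly by differentiating and recognizing a telescoping sum $\sum_{j\ge k}(b_j(\sin y)-b_{j+1}(\sin y))$. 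Your route is thus self-contained and does not invoke Akhilesh's result as a black box; on the other hand, the paper's derivation makes the link between $\gs$ and $\gt^\star$ more visible. Your organization of the inductive step (packaging the two ``upgraded'' identities and processing the one-forms of $h_{s_1}$ from the inside out) is somewhat cleaner than the paper's, which writes out the first two depth increments and leaves the pattern to the reader. The only point I would make explicit is that the termwise telescoping requires $b_j(\sin y)\to 0$, hence $|y|<\pi/2$, with the endpoint $y=\pi/2$ (for $s_1\ge 2$) recovered by the continuity argument you sketch at the end.
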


\begin{proof}
When $d=s_1=1$, the right-hand side of \eqref{equ-thm-MtV-ItInt} is equal to
\begin{align}\label{thm-1stVariant-iniStep}
 \frac{d}{dy} \int_0^y 2\csc 2t\, dt b_{n}(\sin t) \,dt
 =\sec y\csc y\int_0^y b_{n}(\sin t) \,dt.
\end{align}
Observe that
\begin{align*}
{\binom{2k+2}{k+1}}^{-1}\frac{1}{k+1}=\frac{(k+1)!^2}{(2k+2)(2k+1)(2k)!}\frac{1}{k+1}=\frac12{\binom{2k}{k}}^{-1}\frac{1}{2k+1}.
\end{align*}
By Thm.~\ref{thm-gs-Akhilesh} we get
\begin{align*}
\sum_{n_1\ge n} {\binom{2n_1+2}{n_1+1}}^{-1} \frac{(4\sin^2 y)^{n_1+1}}{n_1+1}
=2 \tan y \int_0^y b_{n}(\sin t) \, dt.
\end{align*}
Hence
\begin{align}\label{equ-V3Use}
\sum_{n_1\ge n} \frac{b_{n_1} \sin^{2n_1+1} y}{2n_1+1} =\sec y \int_0^y b_{n}(\sin t) \, dt,
\end{align}
which is exactly \eqref{thm-1stVariant-iniStep}. Thus the case $d=s_1=1$ of the theorem is proved.

Now, repeatedly multiplying \eqref{equ-V3Use} by $\cot y$ and integrating $s-1$ times, we get
\begin{align}\label{equ-V2Use}
\sum_{n_1\ge n} \frac{b_{n_1}\sin^{2n_1+1}y }{(2n_1+1)^s}
=\int_0^y (\cot t\, dt)^{s-2}\, (\csc t\, dt)\, ( b_{n}(\sin t)\, dt).
\end{align}
Replacing $n$ by $n_2$, multiplying by $1/(2n_2+1)$ and taking the sum $\sum_{n_1\ge n_2\ge n}$,
we get
\begin{align*}
\sum_{n_1\ge n_2\ge n} \frac{b_{n_1} \sin^{2n_1+1} y}{(2n_1+1)^s(2n_2+1)}
=&\, \int_0^y (\cot t\, dt)^{s-2} (\csc t\, dt) \sum_{n_2\ge n} \frac{b_{n_2}(\sin t)}{2n_2+1}\, dt\\
=&\, 2 \int_0^y (\cot t\, dt)^{s-2} \, (\csc t\, dt) \, (\csc 2t dt)\, ( b_{n}(\sin t)\, dt).
\end{align*}
Multiplying by $1/(2n_2+1)^{s_2}$ for $s_2\ge2$ we get
\begin{align*}
&\, \sum_{n_1\ge n_2\ge n} \frac{b_{n_1} \sin^{2n_1+1}y}{(2n_1+1)^{s_1}(2n_2+1)^{s_2}}\\
=&\, \int_0^y (\cot t\, dt)^{s-2} (\csc t\, dt) \sum_{n_2\ge n} \frac{b_{n_2}(\sin t)}{(2n_2+1)^{s_2}}\, dt\\
= &\, \int_0^y (\cot t\, dt)^{s_1-2} (\csc t\, dt) (\csc t\, dt) (\cot t\, dt)^{s_2-2} (\csc t\, dt) \, ( b_{n}(\sin t)\, dt).
\end{align*}
The theorem follows from doing these repeatedly and can be easily proved by induction. We leave the details to the interested reader.
\end{proof}

\begin{cor} For all admissible $\bfs=(s_1,\ldots,s_d)\in\N^d$ with $s_1\ge 2$, we have
\begin{align*}
t^\star(\bfs)_n:= \sum_{n_1\ge \cdots \ge n_d\ge n}
\frac{1}{(2n_1+1)^{s_1}\cdots (2n_d+1)^{s_d}}
=\frac2{\pi}\int_0^{\pi/2} h_{s_1} \circ \cdots \circ h_{s_d} \circ b_n(\sin t) \,dt.
\end{align*}
In particular,
\begin{align*}
t^\star(\bfs):= \sum_{n_1\ge \cdots \ge n_d\ge 0}
\frac{1}{(2n_1+1)^{s_1}\cdots (2n_d+1)^{s_d}}
=\frac2{\pi}\int_0^{\pi/2} h_{s_1} \circ \cdots \circ h_{s_d} \circ \,dt.
\end{align*}
\end{cor}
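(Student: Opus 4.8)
The plan is to deduce the Corollary from Theorem~\ref{thm-1stVariant} by integrating identity \eqref{equ-thm-MtV-ItInt} over $y\in[0,\pi/2]$ against the measure $\frac2\pi\,dy$. The key input on the series side is Wallis' formula $\int_0^{\pi/2}\sin^{2m}y\,dy=\frac\pi2\cdot 4^{-m}\binom{2m}{m}$, which in the present normalization reads
\[
 \frac2\pi\int_0^{\pi/2} b_m(\sin y)\,dy=1\qquad(m\in\N_0).
\]
Since for $y\in[0,\pi/2]$ every term of $\gt^\star(\bfs;\sin y)_n$ is non-negative, Tonelli's theorem permits termwise integration, so
\[
 \frac2\pi\int_0^{\pi/2}\gt^\star(\bfs;\sin y)_n\,dy
 =\sum_{n_1\ge\cdots\ge n_d\ge n}\frac1{(2n_1+1)^{s_1}\cdots(2n_d+1)^{s_d}}\cdot\frac2\pi\int_0^{\pi/2}b_{n_1}(\sin y)\,dy
 =t^\star(\bfs)_n,
\]
the series on the right being finite exactly because $s_1\ge2$.

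On the integral side, Theorem~\ref{thm-1stVariant} says the integrand equals $\frac{d}{dy}\int_0^y h_{s_1}\circ\cdots\circ h_{s_d}\circ b_n(\sin t)\,dt$, so by the fundamental theorem of calculus — the antiderivative vanishing at $y=0$ — we get
\[
 \frac2\pi\int_0^{\pi/2}\gt^\star(\bfs;\sin y)_n\,dy
 =\frac2\pi\int_0^{\pi/2} h_{s_1}\circ\cdots\circ h_{s_d}\circ b_n(\sin t)\,dt.
\]
Comparing the two evaluations gives the first assertion, and the ``in particular'' statement follows by setting $n=0$ and using $b_0(\sin t)=1$, which turns $b_n(\sin t)\,dt$ into $dt$.

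The single point that needs care — and the only place $s_1\ge2$ is used on the integral side — is convergence of $\int_0^{\pi/2} h_{s_1}\circ\cdots\circ h_{s_d}\circ b_n(\sin t)\,dt$ at $t=\pi/2$, i.e.\ that the antiderivative above extends continuously to $y=\pi/2$. For $s_1\ge2$ the outermost $1$-form in $h_{s_1}$ is $\csc t\,dt$, bounded near $\pi/2$, so there is no boundary singularity; for $s_1=1$ the factor $h_1=2\csc 2t\,dt$ would create a logarithmic divergence there, matching non-admissibility. Near $t=0$ convergence is automatic, since the innermost integration is against $b_n(\sin t)\,dt\sim t^{2n}\,dt$, at worst followed by one more factor $\csc t\,dt\sim dt/t$. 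I anticipate no real obstacle: modulo these routine convergence checks, the Corollary is a one-line specialization of Theorem~\ref{thm-1stVariant}.
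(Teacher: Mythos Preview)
Your proposal is correct and follows essentially the same approach as the paper: integrate \eqref{equ-thm-MtV-ItInt} over $y\in(0,\pi/2)$, use Wallis' formula $\int_0^{\pi/2}(\sin t)^{2n}\,dt=\pi/(2b_n)$ on the series side, and apply the fundamental theorem of calculus on the integral side. Your added justification of termwise integration and of boundary convergence is more detailed than what the paper records, but the underlying argument is identical.
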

\begin{proof}
Integrating \eqref{equ-thm-MtV-ItInt} over $(0,\pi/2)$ and noticing the fact that
\begin{align*}
 \int_0^{\pi/2} (\sin t)^{2n}dt=\frac{\pi}{2 b_n},
\end{align*}
we obtain the corollary immediately.
\end{proof}

\begin{ex}
Let $\bfs=(2_d)$ be the string of 2's with $d$ repetitions. Then we see that
\begin{align*}
t^\star(2_d)=\frac2{\pi}\int_0^{\pi/2} \left(\frac{dt}{\sin t}\right)^{2d} dt
=\frac2{\pi} i \Big(\Li_{2d+1}(-i)-\Li_{2d+1}(i)\Big) =\frac4{\pi} \gb(2d+1),
\end{align*}
where $\beta$ is the Dirichlet beta function
\begin{align}\label{DirichletBeta}
 \gb(s)=\sum_{k\ge 0}\frac{(-1)^k}{(2k+1)^s}.
\end{align}
Moreover, we have
\begin{align*}
t^\star(2_a,3,2_b)=\frac2{\pi} \int_0^{\pi/2} (\csc t dt)^{2a+1}\circ \cot tdt \circ (\csc tdt)^{2b+1}\circ dt,
\end{align*}
where $a,b\in \N_0$.
\end{ex}

\begin{re} In \cite{Murakami2021}, T. Murakami first proved the analog of Zagier's 2-3-2 formula of MZVs for multiple $t$-values $t(2_a,3,2_b)$. Thereafter, several other proofs have appeared in the literature, see for example \cite{LLO2022}.
\end{re}

\begin{pro} \label{pro-gtIds}
For every positive integer $d$, we have
\begin{align*}
\gt^\star(1_d;\sin y)= &\, 2\csc 2y \Im \Li_d(i\tan y), \\
\gt^\star(2_d;\sin y)=&\, 2\csc y\Im \Li_{2d} \big(i\tan(y/2) \big).
\end{align*}
\end{pro}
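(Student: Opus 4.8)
The plan is to deduce both identities from Theorem~\ref{thm-1stVariant} by taking $n=0$ and then summing the resulting iterated integrals in closed form. For the first identity, I would start from \eqref{equ-thm-MtV-ItInt} with $\bfs=(1_d)$ and $n=0$, which gives
\begin{align*}
\gt^\star(1_d;\sin y)=\frac{d}{dy}\int_0^y h_1\circ\cdots\circ h_1\,dt,
\end{align*}
with $d$ copies of $h_1=2\csc 2t\,dt$. The key computational observation is that $2\csc 2t\,dt=\dd(\log\tan t)$, so the iterated integral $\int_0^y (2\csc 2t\,dt)^{\circ d}$ telescopes: it equals $\frac{1}{d!}(\log\tan y-\lim_{t\to 0^+}\log\tan t)^{?}$ — except that the lower limit diverges, so one must be slightly more careful and regularize, or better, recognize that $\int_0^y (h_1)^{\circ d}$ as a \emph{convergent} iterated integral (the innermost integrand is $dt$, not $h_1$, after the substitution above) is exactly the depth-$d$ iterated integral computing a single polylogarithm. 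Concretely, substituting $u=i\tan t$ turns $2\csc 2t\,dt$ into $du/u$ and $dt$ into $du/(i(1+u^2/i^2))\cdot\dots$; the cleaner route is to use the standard fact that $\int_0^y (\csc 2t\,dt)^{k}\circ(\text{something})$ generates $\Li_k$ of $i\tan y$. After differentiating in $y$ and using $\frac{d}{dy}\tan y=\sec^2 y$, the prefactor $\sec y\csc y = 2\csc 2y$ emerges, yielding $\gt^\star(1_d;\sin y)=2\csc 2y\,\Im\Li_d(i\tan y)$ once one checks that only the imaginary part survives because $i\tan y$ is purely imaginary and the series $\sum (i\tan y)^{m}/m^d$ has real part given by even $m$, imaginary part by odd $m$ — and it is precisely the odd powers, i.e. $\sum_{n\ge 0} b_n\sin^{2n+1}y/(2n+1)\cdot(\text{lower sums})$, that match the left-hand side.

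For the second identity I would run the same argument with $\bfs=(2_d)$ and $n=0$, so the relevant iterated integral is $\int_0^y (h_2)^{\circ d}\circ dt$ where $h_2=\csc t\,dt\circ\csc t\,dt$ (since the middle block $(\cot t\,dt)^{s-2}$ is empty when $s=2$). Thus $\int_0^y (h_2)^{\circ d}\circ dt = \int_0^y (\csc t\,dt)^{\circ 2d}\circ dt$. The half-angle substitution $t=2\theta$, equivalently working with $\tan(y/2)$, is the natural move: $\csc t\,dt=\dd\theta/(\sin\theta\cos\theta)=\dd(\log\tan\theta)$ with $\theta=t/2$, so again the chain of $2d$ one-forms $\csc t\,dt$ becomes a chain of $du/u$ with $u=i\tan(t/2)$, and the final $dt=2\,d\theta$ becomes the polylog-closing one-form. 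Hence $\int_0^y(\csc t\,dt)^{\circ 2d}\circ dt$ is (up to the substitution Jacobian bookkeeping) $\Li_{2d}(i\tan(y/2))$ or its imaginary part. Differentiating in $y$ brings out $\frac{d}{dy}\tan(y/2)=\tfrac12\sec^2(y/2)$, and $\tfrac12\sec^2(y/2)\big/\bigl(\text{denominator from }1+\tan^2(y/2)\bigr)$ combined with $\tan(y/2)=(1-\cos y)/\sin y$ simplifies to the claimed prefactor $2\csc y$, after using $\sin y = 2\sin(y/2)\cos(y/2)$ and $\csc(y/2)\sec(y/2)=2\csc y$.

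I expect the main obstacle to be the bookkeeping in the substitution $u=i\tan(t/2)$ (resp.\ $u=i\tan t$) that converts the mixed iterated integral $(\csc t\,dt)^{\circ m}\circ dt$ into the iterated-integral representation \eqref{czeta-itIntExpression}--\eqref{czeta-itIntExpression} of $\Li_m$ at a fourth root of unity times a real argument: one must verify that the innermost $dt$ maps to exactly the one-form $\tx_\xi$ with the right $\xi$, that the $m$ copies of $\csc t\,dt$ map to $m$ copies of $\ta=du/u$, and that the real/imaginary part is picked out correctly because $i\tan(\cdot)\in i\R$ on the relevant range. A secondary, purely trigonometric obstacle is confirming that differentiating the iterated integral in $y$ and multiplying by the Jacobian produces precisely $2\csc 2y$ in the first case and $2\csc y$ in the second; this is a short but error-prone half-angle computation. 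Everything else — the reduction to $n=0$, the empty middle block when $s=2$, and the identification of $\gt^\star(1_d;\sin y)$ and $\gt^\star(2_d;\sin y)$ with the left-hand sides — is immediate from Theorem~\ref{thm-1stVariant} and the definitions.
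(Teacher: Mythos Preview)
Your plan is correct and will succeed, but it follows a genuinely different route from the paper's argument. Both proofs begin from Theorem~\ref{thm-1stVariant} with $n=0$ and reduce to evaluating the iterated integrals
\[
\int_0^y (2\csc 2t\,dt)^{d-1}\circ dt
\qquad\text{and}\qquad
\int_0^y (\csc t\,dt)^{2d-1}\circ dt.
\]
The paper then invokes the power formula $\int_0^y f^{\circ(d-1)}\circ dt=\frac{1}{(d-1)!}\int_0^y\big(\int_t^y f\big)^{d-1}dt$, evaluates the inner antiderivative explicitly (obtaining $\log|\tan y\cot t|$, respectively $\log|\tan(y/2)\cot(t/2)|$), and finally shows by differentiation and induction on $d$ that the resulting single integral equals $\Im\Li_d(i\tan y)$ (resp.\ $2\Im\Li_{2d}(i\tan(y/2))$). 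Your route is the single change of variables $u=i\tan t$ (resp.\ $u=i\tan(t/2)$), under which $2\csc 2t\,dt\mapsto du/u$, $\csc t\,dt\mapsto du/u$, and $dt\mapsto \frac{1}{i}\,\frac{du}{1-u^2}=\frac{1}{2i}\big(\frac{du}{1-u}+\frac{du}{1+u}\big)$ (with an extra factor of $2$ in the half-angle case); this lands directly on the iterated-integral representation $\frac{1}{2i}\big(\Li_d(i\tan y)-\Li_d(-i\tan y)\big)=\Im\Li_d(i\tan y)$, with no induction required. Your approach is more conceptual and shorter once the substitution is set up; the paper's approach is more elementary in that it avoids the iterated-integral representation of $\Li_d$ and stays with real integrals throughout.

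Two remarks to sharpen your write-up. First, the ``divergence at $t\to0^+$'' you worry about is a red herring: the innermost $1$-form is $dt$ (from $b_0(\sin t)\,dt$), not $\csc$-type, so the iterated integral is convergent at the lower endpoint and your substitution applies directly without regularization. Second, the ``real/imaginary part'' bookkeeping is immediate once you write $dt\mapsto\frac{1}{2i}\big(\tx_1-\tx_{-1}\big)$ and note $\Li_d(-i\tan y)=\overline{\Li_d(i\tan y)}$ for real $y$; there is no need to argue via parity of powers in the series.
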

\begin{proof}
By Thm.~\ref{thm-1stVariant} we obtain
\begin{align}
\gt^\star(1_d;\sin y)= &\, 2\csc 2y \int_0^y \left(\frac{2dt}{\sin 2t}\right)^{d-1} dt
= \frac{2\csc 2y }{(d-1)!} \int_0^y \left(\int_t^y\frac{2dx}{\sin 2x}\right)^{d-1} dt \notag \\
=&\, \frac{2\csc 2y }{(d-1)!} \int_0^y \log^{d-1}\left|\frac{\csc 2t+\cot 2t}{\csc 2y+\cot 2y}\right| dt \notag \\
=&\, \frac{2\csc 2y }{(d-1)!} \int_0^y \log^{d-1}\big|\tan y\cot t \big| \, dt. \label{chi0}
\end{align}
By routine differentiation and induction it can be proved easily that
\begin{align*}
\frac{1}{(d-1)!} \int_0^y \log^{d-1}\big|\tan y\cot t \big| \, dt= \Im \Li_d(i\tan y)
=\frac{i}{2} \Big(\Li_d(-i \tan y)-\Li_d(i\tan y) \Big)
\end{align*}
for all $d\ge 1$. To see that both sides $\to 0$ as $y\to 0$ we can use \eqref{chi0}.
Similarly,
\begin{align*}
\gt^\star(2_d;\sin y)=&\,\csc y \int_0^y \left(\frac{dt}{\sin t}\right)^{2d-1} dt
= \frac{\csc y}{(2d-1)!} \int_0^y \left(\int_t^y\frac{dx}{\sin x}\right)^{2d-1} dt \\
=&\, \frac{\csc y}{(2d-1)!} \int_0^y \log^{2d-1}\big|\tan y/2 \cot t/2\big| \, dt
=2\csc y\Im \Li_{2d} \big(i\tan(y/2) \big).
\end{align*}
This completes the proof of the proposition.
\end{proof}

\begin{ex}
Specializing at $y=\pi/4$ and $\pi/2$ in the two identities of Prop.~\ref{pro-gtIds} respectively, we see that
\begin{align*}
\sum_{n_1\ge \cdots \ge n_d\ge 0} {\binom{2n_1}{n_1}}^{-1}
\frac{2^{n_1}}{(2n_1+1)\cdots (2n_d+1)} =&\,2 \Im \Li_{d}(i)=2\gb(d),\\
\sum_{n_1\ge \cdots \ge n_d\ge 0} {\binom{2n_1}{n_1}}^{-1}
\frac{4^{n_1}}{(2n_1+1)^2\cdots (2n_d+1)^2} =&\, 2\Im \Li_{2d}(i)=2\gb(2d),
\end{align*}
where $\gb$ is the Dirichlet beta function defined by \eqref{DirichletBeta}.
\end{ex}

\section{Second variant with odd summation indices}
In this section, we consider another variation of the Ap\'ery-type series \eqref{defn:gs} by restricting
the summation indices to odd numbers only and \textbf{keeping} the strict inequalities among them. These
series do not behave as well as the first variant studied in the last section but are still of interest.

Define the 1-forms
\begin{align}\label{defn-gk}
\gk_{s}(t)=
\left\{
 \begin{array}{ll}
 \sin t\, dt\, \csc t\,dt+ \tan t\, dt,\quad & \hbox{if $s=1$;} \\
 \sin t\, dt (\cot t\,dt+1) (\cot t\,dt)^{s-2} \csc t\, dt, \phantom{\frac12} \quad & \hbox{if $s\ge 2$,}
 \end{array}
\right.
\end{align}
and their non-trigonometric counterpart
\begin{align}\label{defn-K}
K_{s}(t)=
\left\{
 \begin{array}{ll}
 \om_5\om_3+ \om_2,\quad & \hbox{if $s=1$;} \\
 \om_5 (\om_0+1) \om_0^{s-2} \om_3, \phantom{\frac12} \quad & \hbox{if $s\ge 2$.}
 \end{array}
\right.
\end{align}

\begin{thm} \label{thm-2ndVariant}
For all $n\in\N_0$ and $\bfs=(s_1,\dots,s_d)\in\N^d$ the tail
\begin{align*}
\chi(\bfs;\sin y)_n:=\sum_{n_1> \cdots>n_d>n} \frac{b_{n_1}(\sin y)}{(2n_1-1)^{s_1}\cdots (2n_d-1)^{s_d}}
= \frac{d}{d y} \int_0^y \gk_{s_1}\circ\cdots \circ \gk_{s_d}\circ b_{n}(\sin t) \,dt.
\end{align*}
In the above $y\in[-\pi/2,\pi/2]$ if $s_1>1$ and $y\in(-\pi/2,\pi/2)$ if $s_1=1$. Using
non-trigonometric 1-forms, for $x\in(-1,1)$ we have
\begin{align*}
\chi(\bfs;x)_n:=\sum_{n_1> \cdots>n_d>n} \frac{b_{n_1}(x)}{(2n_1-1)^{s_1}\cdots (2n_d-1)^{s_d}}
=\sqrt{1-x^2}\frac{d}{dx} \int_0^x K_{s_1}\circ\cdots \circ K_{s_d}\circ b_{n}(t) \om_1.
\end{align*}
\end{thm}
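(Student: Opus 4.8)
The strategy mirrors the proof of Thm.~\ref{thm-1stVariant}, but the book-keeping is more delicate because the strict inequalities force the top index $n_1$ to satisfy $n_1\ge n_d+1\ge n+1$, and the weight shift is now to $2n_j-1$ rather than $2n_j+1$. First I would treat the base case $d=1$. Starting from Thm.~\ref{thm-gs-Akhilesh} with depth $1$, one has $\gs((s);\sin y)_n$ expressed through $g_{s}$; the task is to convert the factor $1/(2n_1)^{s}$ into $1/(2n_1-1)^{s}$. The key elementary identity is the partial-fraction/shift relation
\begin{align*}
{\binom{2n}{n}}^{-1}\frac{1}{2n-1}= -2\,{\binom{2n-2}{n-1}}^{-1}\frac{1}{2n-1}+2\,{\binom{2n-2}{n-1}}^{-1},
\end{align*}
or equivalently a relation expressing $b_n/(2n_1-1)$ as a combination of $b_{n-1}$ and $b_{n-1}/(2n_1-1)$; summing such a telescoping relation against $\sin^{2n_1}y$ produces, after one integration, the extra $\om_2$ (i.e.\ $\tan t\,dt$) together with the ``$\om_0+1$'' type correction that appears in \eqref{defn-gk}--\eqref{defn-K}. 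This explains structurally why $\gk_s$ for $s=1$ carries the summand $\sin t\,dt\,\csc t\,dt+\tan t\,dt$: the $\sin t\,dt$ on the left accounts for the factor $\sin^{2n_1+1}$ vs.\ $\sin^{2n_1-1}$, and the composite $\om_5\om_3$ versus $\om_2$ reflects the two terms of the shift identity.

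Next I would do the inductive step on the depth $d$. Assuming the formula for depth $d-1$ with the tail starting at $n_2$, I multiply by ${\binom{2n_1}{n_1}}^{-1}(2x)^{2n_1}/(2n_1-1)^{s_1}$ and sum over $n_1>n_2$. The inner sum over $n_2$ is exactly the depth-$(d-1)$ tail evaluated at the running variable $\sin t$, so by induction it equals $\frac{d}{dt}\int_0^t \gk_{s_2}\circ\cdots\circ\gk_{s_d}\circ b_{n}(\sin u)\,du$, i.e.\ an iterated integral ending in $b_n(\sin t)\,dt$. It remains to show that prepending the operation ``sum $\sum_{n_1>n_2}{\binom{2n_1}{n_1}}^{-1}(2\sin y)^{2n_1}(2n_1-1)^{-s_1}(\cdot)$'' to such an iterated integral corresponds precisely to prepending the 1-form block $\gk_{s_1}$ and applying $\frac{d}{dy}$. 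For $s_1\ge 2$ this is handled by the same ``multiply by $\cot t\,dt$ and integrate $s_1-2$ times'' device used in \eqref{equ-V2Use}, bracketed by the two $\csc t\,dt$ factors, with one of them replaced by the block $\sin t\,dt\,(\cot t\,dt+1)$ coming from the weight-$1$ shift computation; for $s_1=1$ one uses the $d=1$ computation directly. The non-trigonometric version then follows by the substitution $t\mapsto\sin t$, under which $\csc t\,dt\mapsto\om_3$, $\cot t\,dt\mapsto\om_0$, $\tan t\,dt\mapsto\om_2$, $\sin t\,dt\mapsto\om_5$, and $\frac{d}{dy}$ combined with the Jacobian gives the factor $\sqrt{1-x^2}\,\frac{d}{dx}$ against $\om_1$, exactly as in Thm.~\ref{thm-gs-Akhilesh}; here I would invoke the extended iterated-integral notation (1-forms combined with functions) introduced before Thm.~\ref{thm-1stVariant} to make sense of the ``$\cot t\,dt+1$'' and ``$\om_0+1$'' factors.

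The main obstacle, I expect, is verifying the weight-$1$ shift identity rigorously and matching it to the precise 1-form $\gk_1=\sin t\,dt\,\csc t\,dt+\tan t\,dt$, including the correct constants and the behavior at the endpoint $y=0$ (all series and integrals must vanish there, which is where the strict inequality $n_1>n_d>n$ is actually used, since it guarantees $n_1\ge 1$ so that the problematic $n_1=0$ term, which would give a $1/(2n_1-1)=-1$ factor, is present but harmless, or absent, depending on $n$). Once the $s_1=1$ case is nailed down with the correct normalization, the higher $s_1$ cases and the depth induction are routine, following verbatim the pattern of \eqref{equ-V2Use} through the end of the proof of Thm.~\ref{thm-1stVariant}, and I would likewise leave those details to the reader.
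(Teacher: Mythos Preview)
Your overall architecture (establish the depth-one formula for each $s_1$, then iterate over depth exactly as in the proof of Thm.~\ref{thm-1stVariant}) matches the paper, and your description of the inductive step and of the passage to non-trigonometric forms is fine. The gap is in the base case, and it is precisely the point you yourself flag as the obstacle.

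The displayed ``key elementary identity''
\[
{\binom{2n}{n}}^{-1}\frac{1}{2n-1}= -2\,{\binom{2n-2}{n-1}}^{-1}\frac{1}{2n-1}+2\,{\binom{2n-2}{n-1}}^{-1}
\]
is simply false (try $n=2$), and the fallback description ``a relation expressing $b_n/(2n_1-1)$ as a combination of $b_{n-1}$ and $b_{n-1}/(2n_1-1)$'' does not pin down anything workable either. More importantly, the plan to start from Thm.~\ref{thm-gs-Akhilesh} and convert $1/(2n_1)^s$ into $1/(2n_1-1)^s$ by some telescoping is not how the paper proceeds, and I do not see a clean way to make it work: the factor $1/(2n_1-1)$ does not arise from a one-step binomial shift of $1/(2n_1)$.

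The paper's route is to start not from Akhilesh's $\gs$-formula but from the first variant you have just proved, specifically \eqref{equ-V2Use} at $s=2$. Shifting the summation index by one and using
\[
{\binom{2n_1-2}{n_1-1}}^{-1}={\binom{2n_1}{n_1}}^{-1}\frac{2(2n_1-1)}{n_1}
\]
turns that identity into an expression for $\sum_{n_1>n} b_{n_1}(\sin y)/\big(n_1(2n_1-1)\big)$. Differentiating in $y$ produces \eqref{equ-V2}, and then \emph{multiplying by $\tan y$} gives the $s_1=1$ formula \eqref{chi-1} directly; dividing \eqref{equ-V2} by $\sin y$ and integrating (then iterating with $\cot y$) gives the $s_1\ge 2$ formula \eqref{chi-s}. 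The appearance of both $\om_5\om_3$ and $\om_2$ in $\gk_1$, and of the factor $(\cot t\,dt+1)$ in $\gk_s$ for $s\ge 2$, comes out of this differentiation step (product rule against the $\cos y$ coming from $d(\sin y)$), not from a partial-fraction identity of the type you wrote. Once you replace your base-case mechanism with this one, the rest of your plan goes through as stated.
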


\begin{proof}
With $s=2$ the identity \eqref{equ-V2Use} yields that
\begin{align*}
\sum_{n_1> n} {\binom{2n_1-2}{n_1-1}}^{-1} \frac{4^{n_1-1} \sin^{2n_1-1} y}{(2n_1-1)^2}
=\int_0^y (\csc t\, dt) b_{n}(\sin t) \, dt.
\end{align*}
Noting that
\begin{equation*}
{\binom{2n_1-2}{n_1-1}}^{-1}={\binom{2n_1}{n_1}}^{-1}\frac{2(2n_1-1)}{n_1},
\end{equation*}
we have
\begin{align*}
\sum_{n_1> n} \frac{b_{n_1}(\sin y)}{n_1(2n_1-1)} =2\sin y \int_0^y (\csc t\, dt) b_{n}(\sin t)\, dt.
\end{align*}
Differentiating, we obtain
\begin{equation}\label{equ-V2}
\begin{split}
\sum_{n_1> n} \frac{b_{n_1} \sin^{2n_1-1} y\cos y}{2n_1-1}
=&\, \cos y \int_0^y (\csc t\, dt) b_{n}(\sin t)\, dt +\int_0^y b_{n}(\sin t)\, dt.
\end{split}
\end{equation}
Multiplying \eqref{equ-V2} by $\tan y$ we get
\begin{align}\label{chi-1}
\sum_{n_1> n} \frac{b_{n_1}(\sin y)}{2n_1-1}
=&\, \sin y \int_0^y (\csc t\, dt+\sec y) b_{n}(\sin t)\, dt.
\end{align}
Dividing \eqref{equ-V2} by $\sin y$ and integrating
\begin{align*}
\sum_{n_1> n} \frac{b_{n_1} \sin^{2n_1-1}y}{(2n_1-1)^2}
=&\, \int_0^y (\cot t\,dt) (\csc t\, dt) b_{n}(\sin t)\, dt
 +\int_0^y (\csc t\, dt) b_{n}(\sin t)\, dt.
\end{align*}
Repeatedly multiplying by $\cot y$ and integrating, we see that for all $s\ge 2$
\begin{align*}
\sum_{n_1> n} \frac{b_{n_1}\sin^{2n_1-1}y}{(2n_1-1)^s}
=&\, \int_0^y (\cot t\,dt)^{s-1} (\csc t\, dt) b_{n}(\sin t)\, dt
+ \int_0^y (\cot t\,dt)^{s-2} (\csc t\, dt) b_{n}(\sin t)\, dt.
\end{align*}
Hence if $s\ge 2$ then
\begin{align}\label{chi-s}
\sum_{n_1> n} \frac{b_{n_1}(\sin y)}{(2n_1-1)^s}
=&\, \sin y \int_0^y (\cot t\,dt+1) (\cot t\,dt)^{s-2} (\csc t\, dt) b_{n}(\sin t)\, dt.
\end{align}
The theorem now follows from repeatedly applying \eqref{chi-1} or \eqref{chi-s} at each depth.
This concludes the proof of the theorem.
\end{proof}

\section{Variant with summation indices of mixed parities}
By combining Thm.~\ref{thm-gs-Akhilesh}, Thm.~\ref{thm-1stVariant}, and Thm.~\ref{thm-2ndVariant} we obtain
the following result easily.

\begin{thm}\label{thm-Mixed-Parity}
Suppose $d\in \N$ and $\bfs=(s_1,\dots,s_d)\in\N^d$. Let $y\in(-\pi/2,\pi/2)$ if $s_1=1$ and $y\in[-\pi/2,\pi/2]$ if $s_1\ge 2$.
Set $\gl_{2n,s}(t)=g_s(t)$, $\gl_{2n+1,s}(t)=h_s(t)$ and $\gl_{2n-1,s}(t)=\gk_s(t)$ which are defined by \eqref{defn-g}, \eqref{defn-h} and \eqref{defn-gk}, respectively.
Then for any $l_1(n),\dots,l_d(n)=2n,2n\pm 1$ we have the tails
\begin{align}\label{eqn-Mixed-Parity-Trig}
& \sum_{n_1 \ \underset{1}{\succ} \ \dots\underset{d-1}{\succ} n_d\ \underset{d}{\succ} \ n}
\frac{b_{n_1}(\sin y)}{ l_1(n_1)^{s_1}\cdots l_d(n_d)^{s_d}}
 = \frac{d}{dy} \int_0^y \gl_{l_1,s_1} \circ \cdots \gl_{l_d,s_d} \circ b_{n}(\sin t)\,dt,
\end{align}
where ``$\underset{j}{\succ}$'' is ``$\ge$'' if $l_j(n)=2n+1$ and is ``$>$'' otherwise.
Using non-trigonometric 1-forms, we get for all $x\in(-1,1)$
\begin{align}\label{eqn-Mixed-Parity-NonTrig}
& \sum_{n_1 \ \underset{1}{\succ} \ \dots\underset{d-1}{\succ} n_d\ \underset{d}{\succ} \ n}
\frac{b_{n_1}(x)}{ l_1(n_1)^{s_1}\cdots l_d(n_d)^{s_d}}
 = \sqrt{1-x^2} \frac{d}{dx} \int_0^x \gL_{l_1,s_1} \circ \cdots \gL_{l_d,s_d} \circ b_{n}(t) \om_1,
\end{align}
where $\gL_{2n,s}(t)=G_s(t)$, $\gL_{2n+1,s}(t)=H_s(t)$ and $\gL_{2n-1,s}(t)=K_s(t)$ which are defined
by \eqref{defn-G}, \eqref{defn-H} and \eqref{defn-K}, respectively.
\end{thm}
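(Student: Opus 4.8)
The plan is to reduce Theorem~\ref{thm-Mixed-Parity} to the three theorems already proved (Thm.~\ref{thm-gs-Akhilesh}, Thm.~\ref{thm-1stVariant}, Thm.~\ref{thm-2ndVariant}) by an induction on the depth $d$, peeling off one summation index at a time from the \emph{innermost} position. The key observation is that each of those three theorems has exactly the same structural shape: the single-index tail
\[
\sum_{n_1\ \underset{1}{\succ}\ n}\frac{b_{n_1}(\sin y)}{l_1(n_1)^{s_1}}
=\frac{d}{dy}\int_0^y \gl_{l_1,s_1}\circ b_n(\sin t)\,dt,
\]
where $\gl_{l_1,s_1}$ is $g_{s_1}$, $h_{s_1}$ or $\gk_{s_1}$ according to whether $l_1(n)$ is $2n$, $2n+1$ or $2n-1$; indeed this is precisely the $d=1$ case of the three cited results. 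So the base case $d=1$ of \eqref{eqn-Mixed-Parity-Trig} is immediate.

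For the inductive step I would argue exactly as in the proofs of Thm.~\ref{thm-1stVariant} and Thm.~\ref{thm-2ndVariant}: assuming the formula for depth $d-1$, write
\[
\sum_{n_1\ \underset{1}{\succ}\ \cdots\ \underset{d-1}{\succ}\ n_d\ \underset{d}{\succ}\ n}
\frac{b_{n_1}(\sin y)}{l_1(n_1)^{s_1}\cdots l_d(n_d)^{s_d}}
=\sum_{n_d\ \underset{d}{\succ}\ n}\frac{1}{l_d(n_d)^{s_d}}
\left(\sum_{n_1\ \underset{1}{\succ}\ \cdots\ \underset{d-1}{\succ}\ n_d}
\frac{b_{n_1}(\sin y)}{l_1(n_1)^{s_1}\cdots l_{d-1}(n_{d-1})^{s_{d-1}}}\right),
\]
apply the induction hypothesis to the inner sum (with $n$ replaced by $n_d$ and depth $d-1$), obtaining $\frac{d}{dy}\int_0^y \gl_{l_1,s_1}\circ\cdots\circ\gl_{l_{d-1},s_{d-1}}\circ b_{n_d}(\sin t)\,dt$, and then recognize that summing $b_{n_d}(\sin t)/l_d(n_d)^{s_d}$ over $n_d\ \underset{d}{\succ}\ n$ against this iterated integral is exactly the depth-one manipulation already carried out in the relevant one of Thm.~\ref{thm-gs-Akhilesh}, \eqref{equ-V2Use}/\eqref{thm-1stVariant-iniStep}, or \eqref{chi-1}/\eqref{chi-s}, which converts multiplication by $1/l_d(n_d)^{s_d}$ and summation into precomposition with the appropriate block of $1$-forms (namely $g_{s_d}$, $h_{s_d}$, or $\gk_{s_d}$). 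Concatenating, one gets $\frac{d}{dy}\int_0^y \gl_{l_1,s_1}\circ\cdots\circ\gl_{l_d,s_d}\circ b_n(\sin t)\,dt$, completing the induction. The convergence/boundary conditions on $y$ are controlled by $s_1$ alone, exactly as in the three source theorems, so the stated ranges for $y$ carry over verbatim. The non-trigonometric identity \eqref{eqn-Mixed-Parity-NonTrig} then follows by the standard substitution $t=\sin u$ (equivalently $x=\sin y$), under which $\csc t\,dt\mapsto\om_3$, $\cot t\,dt\mapsto\om_0$, $\tan t\,dt\mapsto\om_2$, $\sin t\,dt\mapsto\om_5$, $dt\mapsto\om_1$, $2\csc 2t\,dt\mapsto\om_{20}$, etc., turning $\gl_{l,s}$ into $\gL_{l,s}$ and $\frac{d}{dy}$ into $\sqrt{1-x^2}\,\frac{d}{dx}$; this is the same change of variables already used to pass from the trigonometric to the non-trigonometric forms in each of Thms.~\ref{thm-gs-Akhilesh}, \ref{thm-1stVariant}, \ref{thm-2ndVariant}.

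The only genuinely delicate point — what I expect to be the main obstacle — is bookkeeping at the ``seam'' where two consecutive blocks of $1$-forms meet, in particular whether the outermost $1$-form of $\gl_{l_{j},s_j}$ and the innermost of $\gl_{l_{j+1},s_{j+1}}$ combine correctly when, say, $s_{j+1}=1$ (so the block $\gk_1$ or $h_1$ is a \emph{sum} of composite forms rather than a single string). One must check that the extended integral-with-functions formalism introduced before \eqref{defn-h} is associative/compatible enough that the depth-$(d-1)$ iterated integral can be legitimately fed, as an inner function $b_{n_d}(\sin t)$ gets replaced by that whole integral, into the depth-one reduction step without the inequality conventions ``$\underset{j}{\succ}$'' clashing. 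This is exactly the kind of verification done piecemeal inside the proofs of Thm.~\ref{thm-1stVariant} and Thm.~\ref{thm-2ndVariant}, so I would handle it by a clean induction statement that bundles the inequality pattern into the claim, and remark — as the authors do — that the remaining steps are routine and can be left to the reader.
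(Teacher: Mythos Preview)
Your proposal is correct and is essentially the same argument the paper gives: the paper records the six depth-one identities \eqref{bn-it1}--\eqref{bn-it6} coming from Thms.~\ref{thm-gs-Akhilesh}, \ref{thm-1stVariant}, \ref{thm-2ndVariant}, observes that the prefactor $f_j(t)$ in each converts to the correct 1-form via $f_j(t)\,\om_1=\om_j$, and then concatenates the blocks --- which is exactly your induction unrolled. The ``seam'' issue you flag is handled in the paper precisely by that one-line observation $f_j(t)\,\om_1=\om_j$, so no separate case analysis is needed.
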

\begin{proof} Define
\begin{align*}
f_1(t):=1,\quad f_2(t):= \frac{t}{\sqrt{1-t^2}},\quad f_3(t)=\frac1t, \quad f_{20}(t):= \frac{1}{t\sqrt{1-t^2}}, \quad f_5(t)=t.
\end{align*}
By Thm.~\ref{thm-gs-Akhilesh}, Thm.~\ref{thm-1stVariant}, and Thm.~\ref{thm-2ndVariant}
\begin{align}
 \sum_{n_1>n} \frac{b_{n_1}(x)}{2n_1} = &\, f_2(x)\int_0^x b_{n}(t) \om_1, \label{bn-it1} \\
 \sum_{n_1>n} \frac{b_{n_1}(x)}{(2n_1)^s}=&\, f_1(x) \int_0^x \om_0^{s-2} \om_1\, b_{n}(t)\om_1\quad\forall s\ge 2,\label{bn-it2} \\
 \sum_{n_1\ge n} \frac{b_{n_1}(x)}{2n_1+1}=&\,f_{20}(x)\int_0^x b_{n}(t) \om_1, \label{bn-it3} \\
 \sum_{n_1\ge n} \frac{b_{n_1}(x)}{(2n_1+1)^s}=&\,f_3(x)\int_0^x \om_0^{s-2} \om_3\, b_{n}(t) \om_1 \quad\forall s\ge 2, \label{bn-it4} \\
 \sum_{n_1>n} \frac{b_{n_1}(x)}{2n_1-1} = &\, f_5(x) \int_0^x \om_3\, b_{n}(t) \om_1 +f_2(x) \int_0^x b_{n}(t) \om_1, \label{bn-it5} \\
 \sum_{n_1>n} \frac{b_{n_1}(x)}{(2n_1-1)^{s}} = &\, f_5(x) \int_0^x (\om_0+1) \om_0^{s-2} \om_3\, b_{n}(t) \om_1\quad\forall s\ge 2. \label{bn-it6}
\end{align}
For convenience, we call the right-hand side of \eqref{bn-it1} and \eqref{bn-it2}
(resp. \eqref{bn-it3} and \eqref{bn-it4}, resp. \eqref{bn-it5} and \eqref{bn-it6})
a $\gs$-block (resp. $\gt^\star$-block, resp. $\chi$-block). In \eqref{eqn-Mixed-Parity-NonTrig},
each $s_j$ corresponds to (a variation of) such a block. Observing that $f_j(t)\om_1=\om_j$ we find that
after starting with a block in \eqref{bn-it1}-\eqref{bn-it6},
all the middle blocks should be modified as follows: (i) change $f_j(x)$ to $\om_j$, (ii) remove the
integral sign, and (iii) drop the 1-form $ b_{n}(t) \om_1$. Repeating this until the end block,
for which only operations (i) and (ii) are required.

This concludes the constructive proof of the theorem.
\end{proof}

\begin{re} \label{re-NeedOddVar} 
We note that Ap\'ery-type series with indices of mixed parity already appeared implicitly
in \cite[(1.1)]{DavydychevDe2004}. Indeed, using their notation one need to consider, for e.g., the following series:
\begin{align*}
\sum_{j=1}^\infty \frac{1}{\binom{2j}{j}} \frac{u^j}{j^c} S_a(2j-1)
=&\, 2^c\sum_{j=1}^\infty \frac{1}{\binom{2j}{j}} \frac{u^j}{(2j)^c}  \left(\sum_{k=0}^{j-1}\frac{1}{(2k+1)^a}+\sum_{k=1}^{j-1}\frac{1}{(2k)^a}\right) \\
=&\, 2^c \sum_{j>k\ge0} \frac{1}{\binom{2j}{j}} \frac{u^j}{(2j)^c(2k+1)^a} +2^c \sum_{j>k>0} \frac{1}{\binom{2j}{j}} \frac{u^j}{(2j)^c(2k)^a}.
\end{align*}
\end{re}

\begin{ex} By composing \eqref{bn-it2} and \eqref{bn-it3} we see that
\begin{align*}
 \sum_{n_1\ge n_2>0} \frac{b_{n_1}(x)}{(2n_1+1)(2n_2)^2}= \frac{1}{x\sqrt{1-x^2}}\int_0^x \om_1^3.
\end{align*}
Taking $x=1/2, \sqrt{3}/2$ we see immediately that
\begin{align*}
 \sum_{n_1\ge n_2>0}\binom{2n_1}{n_1}^{-1} \frac{1}{(2n_1+1)(2n_2)^2}= \frac{4}{\sqrt{3}}\frac{(\sin^{-1}(1/2))^3}{3!}=\frac{\pi^3}{4\cdot 81\sqrt{3}},\\
 \sum_{n_1\ge n_2>0}\binom{2n_1}{n_1}^{-1} \frac{3^n}{(2n_1+1)(2n_2)^2}= \frac{4}{\sqrt{3}}\frac{(\sin^{-1}(\sqrt{3}/2))^3}{3!}=\frac{2\pi^3}{81\sqrt{3}}.
\end{align*}
These are consistent with the first two identities at the beginning of \cite{Ablinger2015}. Many other evaluations in
the loc.\ cit.\ can be verified using similar ideas by repeatedly applying \eqref{bn-it1}--\eqref{bn-it6}. 
Taking $x=\sqrt{2}/2$ we also get
\begin{align*}
 \sum_{n_1\ge n_2>0}\binom{2n_1}{n_1}^{-1} \frac{2^n}{(2n_1+1)(2n_2)^2}= \frac{4}{\sqrt{3}}\frac{(\sin^{-1}(\sqrt{2}/2))^3}{3!}=\frac{\pi^3}{96\sqrt{3}}.
\end{align*}
\end{ex}

By specializing at $y=\pi/2$ (or taking limit as $x\to 1^-$) we obtain the following theorem,
which helps answer two questions at the end of \cite{XuZhao2021b} affirmatively in Cor.~\ref{cor-answerQuestions}.

\begin{thm}\label{thm-Mixed-Parity-CMZVs}
Suppose $d\in \N$, $\bfs=(s_1,\dots,s_d)\in\N^d$ and $s_1\ge 2$.
Set $\gd(l)=0$ if $l(n)=2n$ and $\gd(l)=1$ if $l(n)=2n\pm 1$.

\begin{enumerate}[label=\upshape{(\alph*)},leftmargin=1cm]
 \item \label{enu:thm-Mixed-Parity-2n2n+1}
 Suppose $l_1(n),\dots,l_d(n)=2n,2n+1$.
Then we have
\begin{align*}
\sum_{n_1 \ \underset{1}{\succ} \ \cdots\ \underset{d-1}{\succ} n_d\ \underset{d}{\succ} \ 0}
 \frac{b_{n_1}}{l_1(n_1)^{s_1}\cdots l_d(n_d)^{s_d}} \in i^{\gd(l_1)} \CMZV_{|\bfs|}^4,
\end{align*}
where ``$\underset{j}{\succ}$'' is ``$\ge$'' if $l_j(n)=2n+1$ and is ``$>$'' otherwise.

 \item \label{enu:thm-Mixed-Parity-2n-12n+1}
Suppose $l_1(n),\dots,l_d(n)=2n,2n\pm 1$. If for all $l_j(n)=2n-1$ ($j\ge2$) we have $l_{j-1}(n)\ne 2n$, then
we have
\begin{align}\label{equ-thm-Mixed-Parity1}
\sum_{n_1 \ \underset{1}{\succ} \ \cdots\ \underset{d-1}{\succ} n_d\ \underset{d}{\succ} \ 0}
 \frac{b_{n_1}}{l_1(n_1)^{s_1}\cdots l_d(n_d)^{s_d}} \in i^{\gd(l_1)} \Big( \CMZV_{|\bfs|}^4+\nu(l_1)\CMZV_{|\bfs|+1}^4\Big),
\end{align}
where $\nu(l)=1$ if $l(n)=2n-1$ and $\nu(l)=0$ otherwise. In particular, if $l_{j}(n)\ne 2n$ for all $j$ then
\eqref{equ-thm-Mixed-Parity1} holds.

 \item \label{enu:thm-Mixed-Parity-gen}
 More generally, $l_1(n),\dots,l_d(n)=2n,2n\pm 1$ then we have
\begin{align}\label{equ-thm-Mixed-Parity2}
\sum_{n_1 \ \underset{1}{\succ} \ \cdots\ \underset{d-1}{\succ} n_d\ \underset{d}{\succ} \ 0}
 \frac{b_{n_1}}{l_1(n_1)^{s_1}\cdots l_d(n_d)^{s_d}} \in \CMZV_{\le |\bfs|+\nu(l_1)}^4\otimes\Q[i].
\end{align}

 \item \label{enu:thm-Mixed-Parity-Any}
 Moreover, the claim in \eqref{equ-thm-Mixed-Parity2} still holds if one changes any of the strict inequalities
$n_j>n_{j+1}$ to $n_j\ge n_{j+1}$ and vice versa, provided the series is defined.
Here we set $n_{d+1}=0$. In particular, if $l_1(n),\dots,l_d(n)=2n,2n\pm 1$ then
\begin{align}\label{eqn:thm-Mixed-Parity-Any}
\sum_{n_1 \succ n_2 \succ\cdots\ \succ n_d\succ \ 0 }
 \frac{b_{n_1}}{l_1(n_1)^{s_1}\cdots l_d(n_d)^{s_d}} \in \CMZV_{\le |\bfs|+\nu(l_1)}^4\otimes\Q[i],
\end{align}
where ``$\succ$'' can be either ``$\ge$'' or ``$>$'', provided the series is defined.
\end{enumerate}
\end{thm}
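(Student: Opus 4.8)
The plan is to combine Thm.~\ref{thm-Mixed-Parity} (the iterated‐integral formula for the tail) with a systematic analysis of what happens when we evaluate the $n=0$ tail at $x=1$. First I would set $n=0$ in \eqref{eqn-Mixed-Parity-NonTrig}, so that the series in question equals $\sqrt{1-x^2}\,\frac{d}{dx}\int_0^x \gL_{l_1,s_1}\circ\cdots\circ\gL_{l_d,s_d}\circ \om_1$ evaluated in the limit $x\to 1^-$. The key point is that each $\gL_{l_j,s_j}$ is, up to the extra ``$+1$'' terms appearing in $K_s$ and $H_1$, a word in the four 1-forms $\om_0,\om_1,\om_2,\om_3$, and these are precisely the 1-forms that generate (after the standard change of variable $t\mapsto t$, or rather after relating $\om_1=dt/\sqrt{1-t^2}$, $\om_2=t\,dt/(1-t^2)$, $\om_3=dt/(t\sqrt{1-t^2})$ to the algebra of forms $dt/t$, $dt/(1\pm t)$, $dt/(i\pm t)$ via a quadratic substitution) the colored multiple zeta values of level~4. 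So the heart of the proof is a lemma: any convergent iterated integral $\int_0^1$ of a word in $\{\om_0,\om_1,\om_2,\om_3\}$ lies in $\Re\CMZV^4 + i\,\Im\CMZV^4$ of weight equal to the length of the word, and moreover the weight is controlled as claimed.

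The second step is to make the substitution explicit. Writing $t=2u/(1+u^2)$ (so that $\sqrt{1-t^2}=(1-u^2)/(1+u^2)$ when $u\in(0,1)$) turns $\om_1$ into $2\,du/(1+u^2) = du/(u-i) \cdot(\cdots) $, more precisely into a $\Q$-combination of $du/(u-i)$ and $du/(u+i)$; similarly $\om_0=dt/t$ becomes $du/u - du/(u-1) - du/(u+1) + \dots$ wait—more carefully, $\om_0 = (1/u - u/(1+u^2))\,2\,du/\cdots$; in any case each $\om_k$ pulls back to a $\Q[i]$-linear combination of the standard level-4 forms $du/u,\ du/(u\mp1),\ du/(u\mp i)$. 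Since the endpoint $t=1$ corresponds to $u=1$, and the iterated integral is a homotopy invariant, expanding the pulled-back word multilinearly expresses the original integral as a $\Q[i]$-linear combination of iterated integrals from $0$ to $1$ in the level-4 forms, i.e.\ of $\CMZV^4$'s, with real and imaginary parts separated at the end. The length is preserved, which gives the weight bound $|\bfs|$; the one exceptional source of a weight increase is the ``$+1$'' summand inside $K_{s}$ (and the $\om_0$-vs-$\om_2$ split in $H_1$, $\om_{20}$), which when $l_1(n)=2n-1$ contributes a term of one higher length—this is exactly the $\nu(l_1)$ shift in \eqref{equ-thm-Mixed-Parity2}. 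Here I would lean on parts \ref{enu:thm-Mixed-Parity-2n2n+1}–\ref{enu:thm-Mixed-Parity-gen}, which have already been established for the strict/non-strict pattern dictated by the parities; so for \ref{enu:thm-Mixed-Parity-Any} what remains is only to show that \emph{changing} a ``$>$'' to a ``$\ge$'' or vice versa does not take us out of $\CMZV_{\le|\bfs|+\nu(l_1)}^4\otimes\Q[i]$.

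For that final reduction I would use the elementary stuffle-type identity
\begin{align*}
\sum_{n_j > n_{j+1}} = \sum_{n_j \ge n_{j+1}} - \sum_{n_j = n_{j+1}},
\end{align*}
and observe that the diagonal term $\sum_{n_j=n_{j+1}}$ merges $l_j(n)^{s_j}$ and $l_{j+1}(n)^{s_{j+1}}$ into a single factor $l_j(n)^{s_j}l_{j+1}(n)^{s_{j+1}}$ at one index less—but this is only literally a series of the same shape when $l_j = l_{j+1}$; in the mixed case one gets, after partial fractions (e.g.\ $1/((2n)^{a}(2n+1)^{b})$ expands into a $\Q$-combination of $1/(2n)^{c}$'s and $1/(2n+1)^{c}$'s of total weight $a+b-1$ plus lower, using $1/(2n)-1/(2n+1)=1/(2n(2n+1))$ repeatedly), a $\Q$-combination of series each of which again has indices of pure parity at that slot and \emph{strictly smaller} depth, hence is covered by the inductive hypothesis (on depth) together with parts \ref{enu:thm-Mixed-Parity-2n2n+1}–\ref{enu:thm-Mixed-Parity-gen}. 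Running this swap repeatedly, one index at a time, converts any inequality pattern to the ``canonical'' one, at the cost only of lower-depth correction terms that stay within $\CMZV_{\le|\bfs|+\nu(l_1)}^4\otimes\Q[i]$; one must check that the partial-fraction step never raises the weight above $|\bfs|$ and never touches the leading index $n_1$ in a way that changes $\nu(l_1)$, which is clear since merging happens at interior slots and partial fractions are weight-nonincreasing.

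The main obstacle I expect is bookkeeping in this last step: verifying that the partial-fraction expansions arising from the diagonal terms—especially when a $2n-1$ index sits next to a $2n$ or $2n+1$ index, or when several consecutive inequalities are flipped—always produce admissible lower-depth series of the allowed parity types without ever violating the weight bound or the $\nu(l_1)$ constraint, and confirming convergence is preserved at each stage. The iterated-integral half of the argument (steps one and two) is essentially a formal pull-back computation and should be routine once the substitution $t=2u/(1+u^2)$ is pinned down; the combinatorial descent on depth is where the care is needed.
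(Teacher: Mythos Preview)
Your overall plan---use Thm.~\ref{thm-Mixed-Parity} to express the tail as an iterated integral, rationalize $\sqrt{1-t^2}$ by a quadratic substitution, and identify the result with level-4 CMZVs---is exactly the paper's approach, and your treatment of part~\ref{enu:thm-Mixed-Parity-Any} via the stuffle identity $\sum_{>}=\sum_{\ge}-\sum_{=}$ followed by partial fractions on the diagonal matches the paper's argument as well.

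There is, however, a real gap in your handling of parts~\ref{enu:thm-Mixed-Parity-2n-12n+1} and~\ref{enu:thm-Mixed-Parity-gen}. You assert that each $\gL_{l_j,s_j}$ is, up to the ``$+1$'' terms, a word in $\{\om_0,\om_1,\om_2,\om_3\}$; but this is false for the $\chi$-block kernel $K_s$, which contains $\om_5=t\,dt/\sqrt{1-t^2}$ (see \eqref{defn-K}). Under your substitution $t=2u/(1+u^2)$ one finds $\om_5=4u\,du/(1+u^2)^2$, a form with double poles at $u=\pm i$ rather than a $\Q[i]$-combination of $du/(u-\xi)$'s; so the resulting iterated integral is \emph{not} directly a CMZV and your key lemma does not apply. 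The paper deals with this by a preliminary reduction: for every $j\ge 2$ with $l_j(n)=2n-1$, shift the index $n_j\to n_j+1$, which converts that slot to $2n+1$ at the cost of boundary terms and (in the $\gs$--$\chi$ case) partial fractions reducing the depth. After this reduction the $\chi$-block, if present at all, sits only at $j=1$, where $\om_5$ is absorbed by the leading $\sqrt{1-x^2}\,\tfrac{d}{dx}$ and becomes the harmless scalar factor $f_5(x)=x$ in \eqref{bn-it6}. An alternative route visible in the worked examples is integration by parts using $\om_5=-d\sqrt{1-t^2}$ together with $\sqrt{1-t^2}\,\om_3=\om_0$, but either way you need \emph{some} mechanism to eliminate $\om_5$, and your proposal has none.

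A second, smaller gap: for the refined claims in \ref{enu:thm-Mixed-Parity-2n2n+1} and \ref{enu:thm-Mixed-Parity-2n-12n+1} you need to pin down the power $i^{\gd(l_1)}$, not just land in $\CMZV^4\otimes\Q[i]$. The paper does this by observing that among $\om_0,\om_1,\om_2,\om_3$ only $\om_1$ acquires a factor of $i$ under the substitution (see \eqref{1-formChangeVar2}), and then counting the parity of $\om_1$'s in the word produced by \eqref{bn-it1}--\eqref{bn-it4}; you should add this parity count. Finally, a minor correction: the ``$+1$'' in $K_s=\om_5(\om_0+1)\om_0^{s-2}\om_3$ gives the \emph{shorter} word; it is the $\om_0$ branch that raises the length by one, and that is the true source of the $\nu(l_1)$ shift.
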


\begin{re}
Let $q=\max\{j:l_j(n)\ne 2n+1\}$. The series is defined if and only if ``$\underset{q}{\succ}$'' is ``$>$''.
\end{re}

\begin{proof}
Put $\tx_\xi=dt/(\xi-t)$ for any $\xi\in\CC$ and $\td_{\xi,\xi'}=\tx_\xi-\tx_{\xi'}$.
First, we observe that under the change of variables
\begin{align}\label{equ-changVarSeq}
t\to \sin^{-1} t \quad\text{then}\quad t\to \frac{1-t^2}{1+t^2},
\end{align}
we have
\begin{alignat}{6}
\cot t\,dt \to &\, \om_{0}=\ta:=\frac{dt}{t}\to \ty,\quad &
 \csc t \,dt \to &\, \om_3:= \frac{dt}{t\sqrt{1-t^2}}\to \td_{-1,1}, \label{equ:changeVar1} \\
dt \to &\, \om_1:=\frac{dt}{\sqrt{1-t^2}} \to i \td_{-i,i}, \quad &
\sec t\csc t \, dt \to &\, \om_{20}:= \frac{dt}{t(1-t^2)}\to \ty+\tz, \label{1-formChangeVar2} \\
\tan t\,dt \to &\, \om_{2}:= \frac{t\,dt}{1-t^2} \to \tz, \quad &
\sec t\,dt \to &\, \om_8:= \frac{dt}{1-t^2} \to -\ta, \label{equ:changeVar3}
\end{alignat}
where $\ty=\tx_{-i}+\tx_{i}-\tx_{-1}-\tx_{1}$ and $\tz=-\ta-\tx_{-i}-\tx_{i}$.
Furthermore, we notice
\begin{align}
\sin t\,dt \to \om_5&\,=\frac{t\, dt}{\sqrt{1-t^2}} \to \frac{dt}{(i-t)^2}+\frac{dt}{(i+t)^2}.\label{equ:changeVar4}
\end{align}
Let
\begin{align*}
\myO:=\Q\left\langle \om_j : 0\le j\le 3 \text{ or } j=5 \right\rangle.
\end{align*}
By repeatedly using the six cases \eqref{bn-it1}--\eqref{bn-it6} we see that every sum in \eqref{equ-thm-Mixed-Parity2}
can be expressed as $\Q$-linear combinations of the following form
\begin{align}\label{afterIteration}
 \int_0^1 \ga_1 \dots \ga_m
\end{align}
with $m\le |\bfs|$ and $\ga_j\in\myO$.

\medskip
\ref{enu:thm-Mixed-Parity-2n2n+1} In this case $\om_5$ never appears and the weight in \eqref{bn-it1}--\eqref{bn-it4}
is always the same as the number of 1-forms appearing on the right-hand side so that there is
no weight drop. Thus we only need to consider the number $\om_1$'s appearing in \eqref{eqn-Mixed-Parity-NonTrig}.
From \eqref{equ:changeVar1}--\eqref{equ:changeVar3} we see that
only $\om_1$ produces $i$ after the change of variables $t\to (1-t^2)/(1+t^2)$.

Among the four cases \eqref{bn-it1}--\eqref{bn-it4} only iteration \eqref{bn-it2} affects the number of $\om_1$'s in \eqref{equ-thm-Mixed-Parity2}, by adding two $\om_1$'s. Then ending block $ b_n(t)\om_1= \om_1$ when $n=0$. So we
need to consider the starting 1-form inside the iterated integral of \eqref{eqn-Mixed-Parity-NonTrig},
which is chopped off after taking the derivative $d/dx$ and then multiplied by $\sqrt{1-x^2}$.
This 1-form is $\om_1$ if and only if it is a $\gs$-block with $s_1\ge 2$.
Hence, after taking $d/dx$ we find that on the right-hand side of \eqref{eqn-Mixed-Parity-NonTrig}
the total number of $\om_1$'s is even for a starting $\gs$-block (i.e., $l_1(n)=2n$) and the number is odd
for a starting $\gt^\star$-block (i.e., $l_1(n)=2n+1$). The claim of \ref{enu:thm-Mixed-Parity-2n2n+1} is thus proved.

\medskip
\ref{enu:thm-Mixed-Parity-2n-12n+1}
We first claim that we may reduce this case to the case where $l_j(n)=2n-1$ appears only when $j=1$.
We will prove this by induction on the depth. We have nothing to do when the depth is 1. In general,
we change the index $n_j\to n_j+1$ for all $j\ge2$ such that $l_j(n)=2n-1$.
Then we need to consider the following three possible blocks in front of $j$-th block.
Setting $k=n_{j-1},m=n_j, r=s_{j-1}, s=s_j$, we see that
\begin{align*}
& \sum_{k>m} \frac{1}{(2k-1)^{r} (2m-1)^{s} }\Longrightarrow
 \sum_{k>m} \frac{1}{(2k+1)^{r} (2m+1)^{s} }
 = \sum_{k\ge m} \frac{1}{(2k+1)^{r} (2m+1)^{s} }- \frac{1}{(2k+1)^{r+s} },\\
& \sum_{k\ge m} \frac{1}{(2k+1)^{r} (2m-1)^{s} }\Longrightarrow
 \sum_{k\ge m+1} \frac{1}{(2k+1)^{r} (2m+1)^{s} } \\
& \hskip5cm = \sum_{k\ge m} \frac{1}{(2k+1)^{r} (2m+1)^{s} }- \frac{1}{(2k+1)^{r+s}}.
\end{align*}
For the possible $\gt^\star$-block after the $j$-th block, setting $k=n_{j+1},m=n_j, r=s_{j+1}, s=s_j$ we have
\begin{align*}
& \sum_{k> m} \frac{1}{(2m-1)^{s} (2k+1)^{r} }\Longrightarrow
 \sum_{k+1> m} \frac{1}{ (2m+1)^{s} (2k+1)^{r}}
 = \sum_{k\ge m}\frac{1}{ (2m+1)^{s} (2k+1)^{r}}
\end{align*}
which looks in good shape. We also need to consider the possible $\gs$-blocks after the $j$-th block:
(setting $k=n_{j+1},m=n_j, r=s_{j+1}, s=s_j$)
\begin{align*}
& \sum_{k>m} \frac{1}{(2k-1)^{s} (2m)^{r} }\Longrightarrow
 \sum_{k+1>m} \frac{1}{(2k+1)^{s} (2m)^{r} }
 = \sum_{k\ge m} \frac{1}{(2k+1)^{s} (2m)^{r} },
\end{align*}
which looks in good shape, too.

To summarize, the above shows that if no $\gs$-$\chi$-block chain appears then we see that
no weight drops can occur in the decomposed sums.

Furthermore, from the above, we can assume the $\chi$-block appears only as the first block, if it ever does. By
the explicit iterated integral expressions of the $\gs$- and $\gt^\star$-blocks \eqref{bn-it1}--\eqref{bn-it4},
we see that if $\chi$-block does not appear then all the CMZVs involved are of the same weight. If a $\chi$-block
appears at the beginning then \eqref{bn-it6} shows that the weight can increase by one for some CMZVs
and the counting of $\om_1$ is the same as the case with a starting $\gt^\star$-block.
This completes the proof of \ref{enu:thm-Mixed-Parity-2n-12n+1}.

\ref{enu:thm-Mixed-Parity-gen}
As the proof of \ref{enu:thm-Mixed-Parity-2n-12n+1}, we first claim that we may reduce the general case
to the case where $l_j(n)=2n-1$ appears, if it ever does, then $j=1$.
We will prove this by induction on the depth. We have nothing to do when the depth is 1. In general,
we change the index $n_j\to n_j+1$ for all $j\ge2$ such that $l_j(n)=2n-1$.
Then we need to consider the following possible block $\gs$ in front of $j$-th block since
the other two possibilities have been already handled by case \ref{enu:thm-Mixed-Parity-2n-12n+1}:
(setting $k=n_{j-1},m=n_j, r=s_{j-1}, s=s_j$)
\begin{align*}
& \sum_{k>m} \frac{1}{(2k)^{r} (2m-1)^{s} }\Longrightarrow
 \sum_{k>m+1} \frac{1}{(2k)^{r} (2m+1)^{s} }
 = \sum_{k>m} \frac{1}{(2k)^{r} (2m+1)^{s} }- \frac{1}{(2k)^{r} (2k-1)^{s} }.
\end{align*}
Thus by partial fractions we may decompose the second term above as pure powers of either $2k$ or $2k-1$,
thus reducing the depth by 1. We point out that this is also the reason why
the weight may drop due to the partial fractions when $\gs$-$\chi$-block chain appears.

Thus we will assume the $\chi$-block appears only as the first block. Then this case is proved again
by the explicit formula \eqref{bn-it6}.

\medskip
\ref{enu:thm-Mixed-Parity-Any} Observe that for any fixed $n$,
\begin{align*}
\sum_{m\ge n}\frac{1}{m^s(2n\pm 1)^t}=&\, \frac{1}{n^s(2n\pm 1)^t}+\sum_{m> n}\frac{1}{m^s(2n\pm 1)^t},\\
\sum_{m> n}\frac{1}{(2m+1)^t n^s}=&\, -\frac{1}{n^s(2n+1)^t}+\sum_{m\ge n}\frac{1}{(2m+1)^t n^s},\\
\sum_{m> n}\frac{1}{(2m+1)^t (2n-1)^s}=&\, -\frac{1}{(2n+1)^t(2n-1)^s}+\sum_{m\ge n}\frac{1}{(2m+1)^t (2n-1)^s},\\
\sum_{m\ge n}\frac{1}{(2m-1)^t n^s}=&\, -\frac{1}{(2n-1)^t n^s}+\sum_{m> n}\frac{1}{(2m+1)^t n^s},\\
\sum_{m\ge n}\frac{1}{(2m-1)^t (2n+1)^s}=&\, \frac{1}{(2n-1)^t(2n+1)^s}+\sum_{m> n}\frac{1}{(2m+1)^t(2n+1)^s}.
\end{align*}
By partial fraction decomposition we can reduce all the first terms on the right-hand side of the above to a
$\Q$-linear combination of single powers of either $1/n$ or $1/(2n\pm 1)$. The only complication
is that when $j=1$ non-admissible terms may appear, which requires us to use the shuffle regularization.
This is similar to the proof of \cite[Thm.~9.6]{XuZhao2021b} using
\cite[Lemma.~9.4 and Lemma 9.5]{XuZhao2021b}. We leave the details to the interested reader
who may also refer to Example~\ref{exampleS} (especially the computation for $S_2$ on page \pageref{S2})
for the detailed steps to carry this out explicitly.
\end{proof}

\begin{cor}\label{cor:algPts}
Let $d\in \N$ and $\bfs=(s_1,\dots,s_d)\in\N^d$. Let $\bfl=(l_1,\dots,l_d)$ where $l_j(n)=2n$ or $2n\pm 1$ for all $j=1,\dots,d$.
Let ``$\succ$'' denote either ``$\ge$'' or ``$>$''. Then for every real algebraic $x$ such that $0<x\le 1$ the value
\begin{align*}
\Xi(\bfs,\bfl;x):=\sum_{n_1 \succ n_2 \succ\cdots\ \succ n_d\succ \ 0 }
 \frac{b_{n_1}(x)}{l_1(n_1)^{s_1}\cdots l_d(n_d)^{s_d}},
\end{align*}
if it exists, can be expressed as a $\Q[i,x,\sqrt{1-x^2}]$-linear combination of the multiple polylogarithms
evaluated at algebraic points.
\end{cor}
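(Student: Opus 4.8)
The plan is to mirror the proof of Thm.~\ref{thm-Mixed-Parity-CMZVs}, but with a general algebraic endpoint $x$; the new inputs are a convenient rationalizing change of variables together with a careful tracking of the exact shapes of the $1$-form words, which make shuffle regularization unnecessary when $0<x<1$. First observe that the case $x=1$ (which forces $s_1\ge2$ for the series to exist) is already contained in Thm.~\ref{thm-Mixed-Parity-CMZVs}: the level-$4$ CMZVs appearing there are themselves multiple polylogarithms evaluated at the algebraic points $\pm1,\pm i$. So we may assume $0<x<1$, in which case every series in sight converges absolutely, since $b_{n_1}(x)=b_{n_1}x^{2n_1}\to0$ geometrically.

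The first step is to run the combinatorial reductions from the proof of Thm.~\ref{thm-Mixed-Parity-CMZVs}\ref{enu:thm-Mixed-Parity-2n-12n+1}--\ref{enu:thm-Mixed-Parity-Any} essentially verbatim---but now they are strictly easier, because all series converge and \emph{no} regularization is needed. Switching between ``$>$'' and ``$\ge$'' and shifting $n_j\to n_j+1$ for those $j\ge2$ with $l_j(n)=2n-1$ produces diagonal correction terms, and partial fractions decompose each such term into series of strictly smaller depth (still carrying a factor $b_{n_1}(x)$ when the affected index is $n_1$). Iterating on the depth, one writes $\Xi(\bfs,\bfl;x)$ as a finite $\Q$-linear combination of series of the same shape whose inequality pattern is the canonical one of Thm.~\ref{thm-Mixed-Parity} and for which $l_j(n)=2n-1$ occurs, if at all, only when $j=1$. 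It therefore suffices to treat such a reduced series.

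For a reduced series, Thm.~\ref{thm-Mixed-Parity} with $n=0$ (note $b_0(t)\equiv1$) gives
\begin{equation*}
\Xi(\bfs,\bfl;x)=\sqrt{1-x^2}\,\frac{d}{dx}\int_0^x \gL_{l_1,s_1}\circ\cdots\circ\gL_{l_d,s_d}\circ\om_1 .
\end{equation*}
Expanding the $\gL_{l_j,s_j}$ (including the sums hidden in $\om_0+\om_2$ and in the $(\om_0+1)$ factors of the $K$-blocks) turns the right-hand side into a $\Q$-linear combination of iterated integrals $\int_0^x\ga_1\ga_2\cdots\ga_m$ with each $\ga_k\in\{\om_0,\om_1,\om_2,\om_3,\om_5\}$, in which $\ga_m=\om_1$ always (the trailing form) and $\ga_1$ is the \emph{only} slot where $\om_5$ can occur (it lives solely inside $K$-blocks, which after the reduction occur only first). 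Since $\frac{d}{dx}\int_0^x\ga_1\ga_2\cdots\ga_m=f(x)\int_0^x\ga_2\cdots\ga_m$ when $\ga_1=f(t)\,dt$, and $\sqrt{1-x^2}\,f(x)$ equals one of $1,\ x,\ 1/x,\ x/\sqrt{1-x^2},\ \sqrt{1-x^2}/x$ according as $\ga_1$ is $\om_1,\om_5,\om_3,\om_2,\om_0$, each term becomes $c(x)\int_0^x\ga_2\cdots\ga_m$ with $c(x)\in\Q[x,\sqrt{1-x^2}]$ (using that $x$ and $1-x^2$ are algebraic and nonzero, hence $1/x,1/(1-x^2)\in\Q[x]$) and with $\ga_2\cdots\ga_m$ a word in $\om_0,\om_1,\om_2,\om_3$ ending in $\om_1$ --- in particular $\om_5$ is now gone.

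Finally, to identify $\int_0^x\ga_2\cdots\ga_m$, use the substitution $t=\frac{2s}{1+s^2}$, a bijection $[0,\sigma]\to[0,x]$ with $\sigma=\frac{1-\sqrt{1-x^2}}{x}\in(0,1)$, an algebraic number. A direct computation gives the pullbacks $\om_0\mapsto-\tx_0+\tx_i+\tx_{-i}$, $\om_3\mapsto-\tx_0$, $\om_2\mapsto\tx_1+\tx_{-1}-\tx_i-\tx_{-i}$, $\om_1\mapsto i(\tx_i-\tx_{-i})$, where $\tx_\xi=ds/(\xi-s)$; note the trailing $\om_1$ is supported only at $\pm i$. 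Hence $\int_0^x\ga_2\cdots\ga_m$ becomes a $\Q[i]$-linear combination of integrals $\int_0^\sigma\tx_{\zeta_1}\cdots\tx_{\zeta_k}$ with $\zeta_j\in\{0,\pm1,\pm i\}$ and $\zeta_k\in\{i,-i\}$. Each of these converges: on $[0,\sigma]$ the only singular form is $\tx_0$ (at $s=0$), since $\sigma<1$ keeps $\tx_{\pm1}$ regular and $\tx_{\pm i}$ is regular on $\R$, and the trailing factor $\tx_{\zeta_k}$ with $\zeta_k\neq0$ contributes a simple zero at $s=0$ absorbing that boundary singularity. Rescaling $s\mapsto\sigma s$ rewrites $\int_0^\sigma\tx_{\zeta_1}\cdots\tx_{\zeta_k}$ as an iterated integral from $0$ to $1$ in the forms $ds/s$ and $ds/((\zeta_j/\sigma)-s)$, i.e.\ a $\Q$-linear combination of multiple polylogarithms $\Li_{\bfs'}(\bfz)$ at algebraic $\bfz$. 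Combining everything, $\Xi(\bfs,\bfl;x)$ lies in the $\Q[i,x,\sqrt{1-x^2}]$-span of multiple polylogarithms evaluated at algebraic points. The part I expect to require the most care is the structural bookkeeping of the last two paragraphs---that, after the reductions, $\om_5$ occurs only as $\ga_1$ and every word ends in $\om_1$---since it is exactly these two facts that let differentiation discard $\om_5$ (whose rational pullback involves the non-logarithmic forms $ds/(s\mp i)^2$) and that make every monomial $\int_0^\sigma\tx_{\zeta_1}\cdots\tx_{\zeta_k}$ honestly convergent, so that, in contrast with Thm.~\ref{thm-Mixed-Parity-CMZVs}, no shuffle regularization enters.
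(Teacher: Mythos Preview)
Your proof is correct and takes a genuinely different route from the paper's. Both arguments begin the same way: invoke the reductions from Thm.~\ref{thm-Mixed-Parity-CMZVs} to arrange that $l_j(n)=2n-1$ occurs only for $j=1$, apply Thm.~\ref{thm-Mixed-Parity}, and peel off the leading $1$-form so that only $\om_0,\om_1,\om_2,\om_3$ remain in the iterated integral. The divergence is in how the resulting $\int_0^x\ga_2\cdots\ga_m$ is converted to multiple polylogarithms at algebraic arguments. The paper uses the substitution $t\mapsto\frac{1-t^2}{1+t^2}$, which sends $[0,x]$ to $[\gl(x),1]$ with $\gl(x)=\sqrt{\frac{1-x}{1+x}}$; since the upper endpoint now hits the singularity of $\tx_1$, the paper must split the path at a point $\eps$, rescale each piece, and invoke the shuffle regularization to extract the constant term of a polynomial in $\log\eps$. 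You instead use the companion parametrization $t=\frac{2s}{1+s^2}$, which sends $[0,\sigma]$ to $[0,x]$ with $\sigma=\frac{1-\sqrt{1-x^2}}{x}\in(0,1)$ algebraic; because $\sigma<1$ the forms $\tx_{\pm1}$ are regular on the path, and your observation that the trailing $\om_1$ pulls back to $i(\tx_i-\tx_{-i})$ (never $\tx_0$) handles the only remaining boundary singularity. A single rescaling $s\mapsto\sigma s$ then produces honest, convergent $\Li_{\bfs'}(\bfz)$ with no regularization at all. Your approach is cleaner and more elementary for this corollary; the paper's has the virtue of reusing the same change of variables employed throughout for the $x=1$ CMZV identifications, at the cost of the extra regularization machinery.
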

\begin{proof}  
By the proof of Thm.~\ref{thm-Mixed-Parity-CMZVs}, up to factors of $x$ and $\sqrt{1-x^2}$ in front
(which can be seen more easily from \eqref{bn-it1}--\eqref{bn-it6}), $\Xi(\bfs,\bfl;x)$ can be expressed as
a $\Q$-linear combinations of
\begin{align}\label{equ-oms-1forms}
 \int_0^x \Big[\om_j: j=0,1,2,3\Big]_\ell
\end{align}
where $\Big[\om_j: j=0,1,2,3\Big]_{|\bfs|}$ is an iteration of 1-forms of length $\ell\le |\bfs|+1$.
Here the 1-form $\om_5$ is not needed since the proof of Thm.~\ref{thm-Mixed-Parity-CMZVs} shows that
if $l_j(n)=2n-1$ then we may assume $j=1$ (i.e., $\chi$-block only appears at the beginning).
Therefore, after applying the change of variables $t\to \frac{1-t^2}{1+t^2}$, 
by \eqref{equ:changeVar1}--\eqref{equ:changeVar3} we see that \eqref{equ-oms-1forms}
is transformed to a $\Q[i]$-linear combination of iterated integrals of the form
\begin{align*}
 \int_{\gl(x)}^1 \ga_1\dots \ga_\ell, .
\end{align*}
where $\gl(x)=\sqrt{\frac{1-x}{1+x}},\ \ga_j\in \{\tx_0, \tx_\mu: \mu^8=1\}. $
Note $\gl(x)\to x$ under the change of variables $t\to \frac{1-t^2}{1+t^2}$. If $x\ne 1$, to convert this to multiple polylogs we
generally need to use the regularization process. Thus, for an arbitrarily small $\eps>0$ we write
\begin{align*}
 \int_{\gl(x)}^1 \ga_1\dots \ga_\ell=&\, \sum_{j=0}^\ell \int_{\gl(x)}^\eps \ga_1\ga_2\dots \ga_j \int_\eps^1 \ga_{j+1}\dots \ga_\ell\\
 =&\, \sum_{j=0}^\ell (-1)^j \int_\eps^{\gl(x)} \ga_j\dots\ga_2 \ga_1 \int_\eps^1 \ga_{j+1}\dots \ga_\ell\\
 =&\, \sum_{j=0}^\ell (-1)^j \int_{\eps/\gl(x)}^1 \ga'_j\dots\ga'_2 \ga'_1 \int_\eps^1 \ga_{j+1}\dots \ga_\ell
\end{align*}
where $\ga'_k=\tx_{\xi/\gl(x)}$ if $\ga_k=\tx_{\xi}$ where $\xi=0$ or $\xi^8=1$. 
Note that $\xi/\gl(x)$ is still algebraic.
By the usual regularization procedure we see that the last expression can be written as a polynomial $P(\log(\eps))$
plus $O(\eps \log^\ell(\eps))$, such that all the coefficients of $P$ are $\Q$-linear  combination of the multiple polylogarithms
evaluated at algebraic points. Here we have used the fact that $\log(\gl(x))=\tfrac12\big( \Li_1(-x)-\Li_1(x)\big)$.
Finally, taking $\eps\to 0$ yields the corollary at once.
\end{proof}

\begin{thm}\label{thm-binnSquare}
Keep notation as in Thm.~\ref{thm-Mixed-Parity-CMZVs}. Assume $s_1\ge 3$.

\begin{enumerate}[label=\upshape{(\alph*)},leftmargin=1cm]
 \item \label{enu:thm-binnSquare-2n2n+1}
 Let $l_1(n),\dots,l_d(n)=2n,2n\pm 1$. If $l_1(n)\ne 2n-1$ and
 for all $l_j(n)=2n-1$ ($j\ge2$) we have $l_{j-1}(n)\ne 2n$, then
\begin{align}\label{equ-cor-Mixed-Parity1}
\sum_{n_1 \ \underset{1}{\succ} \ \cdots \ \underset{d-1}{\succ} n_d\ \underset{d}{\succ} \ 0}
 \frac{ b_{n_1}^2}{l_1(n_1)^{s_1}\cdots l_d(n_d)^{s_d}} \in \CMZV_{|\bfs|}^4.
\end{align}
 In particular, if $l_{j}(n)\ne 2n$ for all $j$ then \eqref{equ-cor-Mixed-Parity1} holds. If $l_1(n)=2n-1$ and
 for all $l_j(n)=2n-1$ ($j\ge2$) we have $l_{j-1}(n)\ne 2n$, then
\begin{align*}
\sum_{n_1 \ \underset{1}{\succ} \ \cdots\ \underset{d-1}{\succ} n_d\ \underset{d}{\succ} \ 0}
 \frac{ b_{n_1}^2}{l_1(n_1)^{s_1}\cdots l_d(n_d)^{s_d}} \in \CMZV_{|\bfs|}^4+\CMZV_{|\bfs|+1}^4+\CMZV_{|\bfs|+2}^4.
\end{align*}

 \item \label{enu:thm-binnSquare-gen}
 More generally, if $l_1(n),\dots,l_d(n)=2n,2n\pm 1$ then we have
\begin{align}\label{equ-cor-Mixed-Parity2}
\sum_{n_1\ \underset{1}{\succ}\ \cdots\ \underset{d-1}{\succ} n_d\ \underset{d}{\succ} \ 0}
 \frac{ b_{n_1}^2}{l_1(n_1)^{s_1}\cdots l_d(n_d)^{s_d}} \in \CMZV_{\le |\bfs|+2\nu(l_1)}^4\otimes\Q[i].
\end{align}

 \item \label{enu:thm-binnSquare-Any}
 Moreover, the claim in \ref{enu:thm-binnSquare-gen} still holds if one changes any of the strict inequalities
$n_j>n_{j+1}$ to $n_j\ge n_{j+1}$ in \eqref{equ-cor-Mixed-Parity2} and vice versa,
provided the series is defined. In particular,
\begin{align}\label{eqn:thm-binnSquare-Any}
\sum_{n_1\ \succ\ \cdots\ \succ \ n_d \succ \ 0}
 \frac{ b_{n_1}^2}{l_1(n_1)^{s_1}\cdots l_d(n_d)^{s_d}} \in \CMZV_{\le |\bfs|+2\nu(l_1)}^4\otimes\Q[i],
\end{align}
where ``$\succ$'' can be either ``$\ge$'' or ``$>$'', provided the series is defined.
\end{enumerate}
\end{thm}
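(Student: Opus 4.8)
The plan is to reduce the squared-binomial case to the ordinary case already proved in Thm.~\ref{thm-Mixed-Parity-CMZVs} by means of an integral representation for $b_n^2$ analogous to the one underlying Thm.~\ref{thm-gs-Akhilesh}. Recall that $b_n=4^n\binom{2n}{n}^{-1}$ arises as the beta-integral $\int_0^{1/2}\frac{4t}{\sqrt{1-4t^2}}t^{2n}\,dt$ (up to the $1/(2n+1)$ factor), or equivalently from $\int_0^y b_n(\sin t)\,dt$-type identities. The key first step is to establish a companion formula expressing $b_n^2 x^{2n}$ (or $b_n^2\sin^{2n}y$) as an iterated integral whose innermost object is $b_n(\cdot)$ times a suitable 1-form; the natural candidate uses $b_n^2 = b_n \cdot b_n$ together with a convolution/Euler-type integral such as $b_n^2 = c\int_0^1 (\text{something})\, b_n(\text{something})\,du$. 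Concretely, I would look for an identity of the shape $\sum_{n_1> n} \frac{b_{n_1}^2 x^{2n_1}}{(2n_1)^{s}} = (\text{algebraic prefactor in }x,\sqrt{1-x^2})\int_0^x \om_0^{s-3}\,\mu_1\,\mu_2\, b_n(t)\,\om_1$ valid for $s\ge 3$, where $\mu_1,\mu_2$ are among the 1-forms $\om_0,\om_1,\om_2,\om_3,\om_5$ (and their variants); the requirement $s_1\ge 3$ rather than $s_1\ge 2$ signals that producing $b_n^2$ costs two extra integrations rather than one, consistent with the extra $\nu(l_1)$-shift by $2$ in \eqref{eqn:thm-binnSquare-Any}.

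Granting such a representation, the second step is purely formal bookkeeping that mirrors the proof of Thm.~\ref{thm-Mixed-Parity-CMZVs}: one writes out the analogues of \eqref{bn-it1}--\eqref{bn-it6} with $b_{n_1}$ replaced by $b_{n_1}^2$ for the \emph{first} block only (the inner blocks, indexed by $n_2,\dots,n_d$, still involve the ordinary $b_{n_j}$ and hence are governed verbatim by \eqref{bn-it1}--\eqref{bn-it6}), compose the blocks, apply the change of variables \eqref{equ-changVarSeq}, and read off that every resulting object is an iterated integral in the 1-forms $\tx_\xi$ with $\xi^8=1$ — i.e. a CMZV of level $4$ (real or imaginary part) — of weight at most $|\bfs|+2\nu(l_1)$. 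The counting of factors of $i$ goes through exactly as before since only $\om_1$ contributes an $i$ under $t\mapsto(1-t^2)/(1+t^2)$.

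For part~\ref{enu:thm-binnSquare-2n2n+1} I would track the weight more carefully. When no $\chi$-block occurs and the squared block sits at the front with $s_1\ge 3$, the prefactor and the two extra 1-forms are arranged so that there is no weight drop and the output lands cleanly in $\CMZV_{|\bfs|}^4$; the hypothesis forbidding a $2n$-index immediately preceding a $2n-1$-index is exactly what rules out the partial-fraction weight drop identified in the proof of \ref{enu:thm-Mixed-Parity-gen}. When instead $l_1(n)=2n-1$, the $\chi$-block formula \eqref{bn-it6} contributes the $\om_0+1$ factor, which on integration can raise the weight by one, and combined with the two extra integrations for the square this yields the three-term range $\CMZV_{|\bfs|}^4+\CMZV_{|\bfs|+1}^4+\CMZV_{|\bfs|+2}^4$. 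Part~\ref{enu:thm-binnSquare-gen} then follows by allowing arbitrary parities and invoking the depth-reduction argument of \ref{enu:thm-Mixed-Parity-gen} (moving any interior $2n-1$ to the front via $n_j\to n_j+1$ and partial fractions), and part~\ref{enu:thm-binnSquare-Any} follows from the same five shifting identities used in \ref{enu:thm-Mixed-Parity-Any} together with shuffle regularization of the resulting non-admissible terms.

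The main obstacle is the very first step: finding the correct integral representation of $b_n^2$ that meshes with the iterated-integral calculus — in particular, one that keeps the inner structure a clean $b_n(t)\,\om_1$ so that the recursion of Thm.~\ref{thm-gs-Akhilesh} can be re-entered at depth $\ge 2$. I expect this to come from squaring the beta-integral and simplifying the double integral (or from a known hypergeometric identity for $\binom{2n}{n}^{-2}$, e.g. via a $\,_3F_2$ evaluation or a Clausen-type product formula), and the bookkeeping of which two 1-forms appear — and whether the algebraic prefactor is a polynomial in $x$ and $\sqrt{1-x^2}$ as opposed to something worse — is where the real work lies; everything after that is a routine adaptation of the arguments already given for Thm.~\ref{thm-Mixed-Parity-CMZVs}.
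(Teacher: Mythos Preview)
Your overall strategy---produce the extra factor of $b_{n_1}$ by integrating the $x$-dependent series from Thm.~\ref{thm-Mixed-Parity} against a suitable kernel, then reuse the bookkeeping of Thm.~\ref{thm-Mixed-Parity-CMZVs}---is exactly the paper's, but you are overcomplicating the ``main obstacle.'' The paper's key observation is the single beta integral
\[
\int_0^1 \frac{x^{2n+1}}{\sqrt{1-x^2}}\,dx=\int_0^{\pi/2}\sin^{2n+1}t\,dt=\frac{b_n}{2n+1},
\]
so that integrating $b_{n_1}(x)=b_{n_1}x^{2n_1}$ against $x\,dx/\sqrt{1-x^2}$ over $(0,1)$ yields $b_{n_1}^2/(2n_1+1)$. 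No squared beta integral, no Clausen/${}_3F_2$ identity, and no convolution is needed: one simply takes the iterated integral already furnished by \eqref{eqn-Mixed-Parity-NonTrig} (after stripping the outer $\sqrt{1-x^2}\,d/dx$) and integrates it \emph{once} more, inserting a single $\om_1$ or $\om_3$ at the front. Concretely, for $l_1(n)=2n+1$ one multiplies the expression in \eqref{bn-it4} (with $s_1-1$ in place of $s$) by $x$ and integrates to obtain $\int_0^1\om_1\om_0^{s_1-3}\om_3\circ\gL_{l_2,s_2}\circ\cdots\circ\om_1$; for $l_1(n)=2n$ one divides \eqref{bn-it2} by $x\sqrt{1-x^2}$ and integrates, using $b_{n-1}/(2n-1)=b_n/(2n)$, to obtain a leading $\om_3$.

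Consequently your claim that ``producing $b_n^2$ costs two extra integrations'' is off: it costs \emph{one} extra 1-form at the front, and the hypothesis $s_1\ge 3$ is needed because the beta integral absorbs one unit from $s_1$ (the factor $1/(2n_1+1)$), so one must have $s_1-1\ge 2$ to invoke the admissible cases \eqref{bn-it2}/\eqref{bn-it4}/\eqref{bn-it6}. The $+2$ weight shift when $l_1(n)=2n-1$ has a different source: there one first \emph{differentiates} the $\chi$-block expression from \eqref{bn-it6} (turning the leading $x(\om_0+1)$ into $(\om_0+1)^2$) and only then integrates against $dx/\sqrt{1-x^2}$; the expansion of $(\om_0+1)^2$ is what produces the three weights $|\bfs|$, $|\bfs|+1$, $|\bfs|+2$. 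The remainder of your outline---the parity count of $\om_1$'s for membership in $i^{\gd}\CMZV^4$, the partial-fraction reduction of interior $\chi$-blocks, and the handling of $\ge$ versus $>$ via the shifting identities and regularization---matches the paper.
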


\begin{proof} The key observation is that
\begin{align*}
 \int_0^1 \frac{x^{2n+1}}{\sqrt{1-x^2}} \,dx= \int_0^{\pi/2} \sin^{2n+1} t \,dt= B \Big(n+1,\frac12\Big)=\frac{b_n}{2n+1}.
\end{align*}

\medskip
\ref{enu:thm-binnSquare-2n2n+1} When $l_1(n)=2n+1$ by \eqref{eqn-Mixed-Parity-NonTrig} we see that the sum
\begin{align*}
& \sum_{n_1 \ \underset{1}{\succ} \ \dots\underset{d-1}{\succ} n_d\ \underset{d}{\succ} \ 0}
\frac{b_{n_1}(x)/\sqrt{1-x^2} }{ l_1(n_1)^{s_1}\cdots l_d(n_d)^{s_d}}
 = \frac{1}{x\sqrt{1-x^2}} \int_0^x \om_0^{s_1-2} \om_3 \circ \gL_{l_2,s_2}(t) \circ \cdots \gL_{l_d,s_{d}}(t)\circ \om_1
\end{align*}
when $s_1\ge 2$. Thus multiplying by $x$ on both sides and integrating over $(0,1)$ we get
\begin{align*}
 \sum_{n_1 \ \underset{1}{\succ} \ \dots\underset{d-1}{\succ} n_d\ \underset{d}{\succ} \ 0}
 \frac{b_{n_1}^2}{l_1(n_1)^{s_1+1}\cdots l_d(n_d)^{s_d}}
 = &\, \int_0^1 \om_1 \om_0^{s_1-2} \om_3 \circ \gL_{l_2,s_2}(t) \circ \cdots \gL_{l_d,s_{d}}(t)\circ \om_1.
\end{align*}
The claim follows immediately since there are even number of $\om_1$'s in this case.

\medskip
\ref{enu:thm-binnSquare-gen} If $l_1(n)=2n$ then we see that
\begin{align*}
 \sum_{n_1 \ \underset{1}{\succ} \ \dots\underset{d-1}{\succ} n_d\ \underset{d}{\succ} \ 0}
 \frac{b_{n_1}(x)}{l_1(n_1)^{s_1}\cdots l_d(n_d)^{s_d}}
 = &\, \int_0^1 \om_0^{s-2} \om_1 \circ \gL_{l_2,s_2}(t) \circ \cdots \gL_{l_d,s_{d}}(t)\circ \om_1
\end{align*}
when $s_1\ge 2$. Then we can divide by $x\sqrt{1-x^2}$ and integrate over $(0,1)$ to get
\begin{align*}
\sum_{n_1 \ \underset{1}{\succ} \ \dots\underset{d-1}{\succ} n_d\ \underset{d}{\succ} \ 0}
 \frac{b_{n_1}^2}{l_1(n_1)^{s_1+1}\cdots l_d(n_d)^{s_d}}
 =&\, \sum_{n_1 \ \underset{1}{\succ} \ \dots\underset{d-1}{\succ} n_d\ \underset{d}{\succ} \ 0}
 \frac{b_{n_1} b_{n_1-1}}{(2n_1-1) l_1(n_1)^{s_1}\cdots l_d(n_d)^{s_d}} \\
 =&\,\int_0^1 \om_3 \om_0^{s_1-2} \om_1 \circ \gL_{l_2,s_2}(t) \circ \cdots \gL_{l_d,s_{d}}(t)\circ \om_1
\end{align*}
since
\begin{equation}\label{bn-1Tobn}
 \frac{b_{n_1-1}}{2n_1-1}= \frac{b_{n_1}}{2n_1}.
\end{equation}
The theorem holds as well in this case as the number of $\om_1$'s is still even.

\medskip
\ref{enu:thm-binnSquare-Any} If $l_1(n)=2n-1$ then since $s_1\ge 3$ by \eqref{eqn-Mixed-Parity-NonTrig} we have
\begin{align*}
& \sum_{n_1 \ \underset{1}{\succ} \ \dots\underset{d-1}{\succ} n_d\ \underset{d}{\succ} \ n}
\frac{b_{n_1}(x)}{ l_1(n_1)^{s_1}\cdots l_d(n_d)^{s_d}}
 = x\int_0^x (\om_0+1) \om_0^{s-2} \om_3\circ\gL_{l_2,s_2}(t) \circ \cdots \gL_{l_d,s_{d}}(t)\circ b_{n}(t) \om_1.
\end{align*}
We first differentiate this to get
\begin{align*}
\sum_{n_1 \ \underset{1}{\succ} \ \dots\underset{d-1}{\succ} n_d\ \underset{d}{\succ} \ n}
\frac{2n_1 b_{n_1} x^{2n_1-1} }{ l_1(n_1)^{s_1}\cdots l_d(n_d)^{s_d}}
 = &\,\int_0^x (\om_0+1)^2 \om_0^{s-3} \om_3\circ\gL_{l_2,s_2}(t) \circ \cdots \gL_{l_d,s_{d}}(t)\circ b_{n}(t) \om_1.
\end{align*}
As in the $l_1(n)=2n$ case, we can divide by $\sqrt{1-x^2}$ and integrate over $(0,1)$ to get
\begin{align*}
& \sum_{n_1 \ \underset{1}{\succ} \ \dots\underset{d-1}{\succ} n_d\ \underset{d}{\succ} \ n}
\frac{b_{n_1}^2}{ l_1(n_1)^{s_1}\cdots l_d(n_d)^{s_d}}
 = \int_0^1 \om_1(\om_0+1)^2 \om_0^{s_1-3} \om_3\circ\gL_{l_2,s_2}(t) \circ \cdots \gL_{l_d,s_{d}}(t)\circ b_{n}(t) \om_1
\end{align*}
by using \eqref{bn-1Tobn} again. This completes the proof of the theorem.
\end{proof}

\section{A corollary and some examples}
In this last section, we will apply our main theorems to compute a few typical Ap\'ery type series to illustrate the power of our method. We can also see how the regularization process is needed in some of the examples.

First, we can answer affirmatively a few questions we posted at the end of \cite{XuZhao2021b}.
For $\bfk\in\N^d$ and $\bfl\in\N^e$ we define
\begin{align*}
\ze_n(\bfk):=&\, \sum_{n\ge m_1>\dots>m_d>0} \frac{1}{m_1^{k_1}\cdots m_d^{k_d}}, \\
t_n(\bfl):=&\, \sum_{n\ge r_1>\dots>r_e>0} \frac{1}{(2r_1-1)^{l_1}\cdots (2r_e-1)^{l_e}}.
\end{align*}

\begin{cor}\label{cor-answerQuestions}
For all $m\in\N$, $p\in\N_{\ge2}$, $q\in\N_{\ge3}$,
and all compositions of positive integers $\bfk$ and $\bfl$ (including the cases $\bfk=\emptyset$ or $\bfl=\emptyset$),
we have
\begin{alignat*}{4}
{\rm (a)}& \ \ \sum_{n=1}^\infty b_n \frac{\ze_n(\bfk)t_n(\bfl)}{n^{p}} \in \CMZV_{|\bfk|+|\bfl|+p}^4,& \quad
{\rm (b)}& \ \ \sum_{n=1}^\infty b_n^2 \frac{\ze_n(\bfk)t_n(\bfl)}{n^{q}} \in \CMZV^{4}_{|\bfk|+|\bfl|+q},\\
{\rm (c)}& \ \ \sum_{n=0}^\infty b_n \frac{\ze_n(\bfk)t_n(\bfl)}{(2n+1)^{p}} \in i\CMZV^{4}_{|\bfk|+|\bfl|+p},& \quad
{\rm (d)}& \ \ \sum_{n=0}^\infty b_n^2 \frac{\ze_n(\bfk)t_n(\bfl)}{(2n+1)^{q}} \in \CMZV^{4}_{|\bfk|+|\bfl|+q}.
\end{alignat*}
\end{cor}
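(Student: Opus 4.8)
The plan is to deduce (a)--(d) from Thm.~\ref{thm-Mixed-Parity-CMZVs} and Thm.~\ref{thm-binnSquare} by expanding the inner factor $\ze_n(\bfk)\,t_n(\bfl)$ into a $\Q$-linear combination of the Apéry-type multiple series already treated there. First I would normalize the two truncated sums onto one footing: in $\ze_n(\bfk)$ write $m_i^{-k_i}=2^{k_i}(2m_i)^{-k_i}$, and in $t_n(\bfl)$ perform the shift $r_j\mapsto r_j+1$, which replaces each $(2r_j-1)^{-l_j}$ by $(2r_j+1)^{-l_j}$ and rewrites the top constraint $n\ge r_1$; this is the device that makes the star-type sums behave well (cf.\ the proof of Thm.~\ref{thm-1stVariant}), and it removes the $\chi$-type blocks. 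The product $\ze_n(\bfk)\,t_n(\bfl)$ is then a quasi-shuffle of two decreasing chains of indices lying below $n$; carrying out the stuffle and, at every position where an index of the $\ze$-chain coincides with one of the $t$-chain, clearing the factor $(2p)^{-a}(2p+1)^{-b}$ by partial fractions, one obtains a $\Q$-linear combination of multiple series in which each index carries a single power of $2p$ or $2p+1$. Prepending the corresponding head $b_n/n^{p}=2^{p}b_n/(2n)^{p}$ (resp.\ $b_n^2/n^{q}$, $b_n/(2n+1)^{p}$, $b_n^2/(2n+1)^{q}$) and renaming $n=n_1$, the series in (a)--(d) become $\Q$-linear combinations of series of the form \eqref{eqn:thm-Mixed-Parity-Any} and \eqref{eqn:thm-binnSquare-Any} with leading exponent $p\ge 2$ (resp.\ $q\ge 3$), which is exactly what those theorems require.

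Next I would track the weight and the power of $i$. The unique ``diagonal-free'' summand of the stuffle is a single chain of weight $|\bfk|+|\bfl|+p$ (resp.\ $+q$); every partial-fraction correction coming from a coincidence of two indices strictly lowers the number of $1$-forms, hence the weight, so each piece lies in $\CMZV^4$ of weight at most $|\bfk|+|\bfl|+p$ (resp.\ $+q$), with the diagonal-free term realizing the bound. For the power of $i$, after the substitution \eqref{equ-changVarSeq} only $\om_1$ contributes a factor $i$, so one counts its occurrences in the iterated integral produced by \eqref{bn-it1}--\eqref{bn-it6}: a leading $\gs$-block $(2n_1)^{-p}$ with $p\ge 2$ gives an even count (cf.\ case~\ref{enu:thm-Mixed-Parity-2n2n+1}), which settles (a); squaring $b_n$ inserts one further $\om_1$ through $\int_0^1 x^{2n+1}(1-x^2)^{-1/2}\,dx=b_n/(2n+1)$, so a leading $\gs$-block stays even and a leading $\gt^\star$-block becomes even, settling (b) and (d); for (c) the single $b_n$ over a leading $\gt^\star$-block $(2n_1+1)^{-p}$ leaves an odd number of $\om_1$'s, producing the factor $i$. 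Interior blocks contribute no net $\om_1$'s to this parity.

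The hard part is that $\ze_n(\bfk)$ and $t_n(\bfl)$ are truncated with ``$n\ge$'', so the stuffle inevitably produces diagonal terms in which an inner index equals $n$; after partial fractions these spawn individually divergent, non-admissible series such as $\sum_n b_n/(2n)$ and $\sum_n b_n/(2n\pm1)$. As in the proof of Thm.~\ref{thm-Mixed-Parity-CMZVs}\ref{enu:thm-Mixed-Parity-Any}, I would dispose of these by the shuffle regularization of the associated iterated integrals, following \cite[Thm.~9.6]{XuZhao2021b} and \cite[Lemmas~9.4 and~9.5]{XuZhao2021b} (and the worked computation of $S_2$ in Example~\ref{exampleS}): the regularized pieces recombine so that all divergences, and all spurious real/imaginary contributions, cancel, leaving a genuine element of the claimed space. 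Once this regularization bookkeeping is in place, (a)--(d) follow from the reductions above, and I expect it to be the only real obstacle.
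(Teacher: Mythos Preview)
Your overall strategy---shift $t_n(\bfl)$ to odd type $2r+1$, interleave with $\ze_n(\bfk)$, then invoke Thm.~\ref{thm-Mixed-Parity-CMZVs} and Thm.~\ref{thm-binnSquare}---matches the paper's, but the execution introduces a genuine gap.  By treating the coincidence $m_i=r_j$ as a true stuffle diagonal and clearing $(2p)^{-a}(2p+1)^{-b}$ by partial fractions, you create lower-weight pieces; you yourself note that each such correction ``strictly lowers the number of $1$-forms, hence the weight''.  That only places the sum in $\bigoplus_{j\le w}\CMZV_j^4$, whereas the corollary asserts membership in $\CMZV_w^4$ for the \emph{exact} weight $w=|\bfk|+|\bfl|+p$ (resp.\ $+q$), and the paper distinguishes these spaces elsewhere.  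The paper sidesteps this with a single observation: for integers $m,r$ the conditions $m>r$ and $r\ge m$ form a dichotomy, so interleaving the even-type chain $\{m_i\}$ with the odd-type chain $\{r_j\}$ never produces a mixed diagonal at all.  Because even-type positions carry ``$>$'' and odd-type positions carry ``$\ge$'', every interleaving lands \emph{verbatim} in the hypotheses of Thm.~\ref{thm-Mixed-Parity-CMZVs}\ref{enu:thm-Mixed-Parity-2n2n+1} (resp.\ Thm.~\ref{thm-binnSquare}\ref{enu:thm-binnSquare-2n2n+1}), which gives the sharp weight.  The only residual adjustment is converting ``$>$'' to ``$\ge$'' between two \emph{odd}-type indices, and that diagonal term $(2k+1)^{-(a+b)}$ preserves weight exactly.

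This also removes your second difficulty.  With no partial fractions the leading exponent is never split, so it remains $\ge 2$ (resp.\ $\ge 3$) throughout and every series produced is already admissible; the regularization machinery you anticipated as ``the only real obstacle'' is not needed here.
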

\begin{proof} Write
\begin{align*}
\ze_n(\bfk)=\sum_{n\ge m_1>\dots>m_d>0} \frac{1}{m_1^{k_1}\cdots m_d^{k_d}},
\quad
t_n(\bfl)= \sum_{n> r_1>\dots>r_e \ge 0} \frac{1}{(2r_1+1)^{l_1}\cdots (2r_e+1)^{l_e}}.
\end{align*}
We only need to note
the following facts: (i) for any summation index $m$ for $\ze_n(\bfk)$ and summation index $r$ for $t_n(\bfl)$ there
are only two possibilities: $m>r$ or $r\ge m$; (ii) we can re-write
\begin{align*}
 \sum_{n>r_1} \frac{1}{(2n+1)^q (2r_1+1)^{l_1}}= \sum_{n\ge r_1} \frac{1}{(2n+1)^q (2r_1+1)^{l_1}}-\frac{1}{(2n+1)^{q+l_1}}
\end{align*}
and obtain similar identities when $n$ and $r_1$ are replaced by $r_{j}$ and $r_{j+1}$. Therefore, we see that
(a) and (c) are special cases of Thm.~\ref{thm-Mixed-Parity-CMZVs}\ref{enu:thm-Mixed-Parity-2n2n+1}.
(b) and (d) are special cases of Thm.~\ref{thm-binnSquare}\ref{enu:thm-binnSquare-2n2n+1}.
\end{proof}

In the following we will compute a series of examples using our main theorems.
\begin{ex}
When depth $d=1$, by \eqref{bn-it4} we see that for all $x\in[-1,1]$
\begin{align}\label{Formula-odd-x}
 \sum_{n\ge 0} \frac{b_n(x)}{(2n+1)^{m+2}}
 =&\, \frac1{x}\int_0^x \om_0^m \om_3 \om_1
\end{align}
for all $m\ge 0$. Applying $t\to \frac{1-t^2}{1+t^2}$ to \eqref{Formula-odd-x} we have
\begin{align*}
 \sum_{n\ge 0} \frac{b_n(x)}{(2n+1)^{m+2}}
 =&\,  \frac{i (-1)^m}{x} \int_{\gl(x)}^1 (\tx_{i}-\tx_{-i})(\tx_1-\tx_{-1}) \ty^m,
\end{align*}
where $\gl(x)=\sqrt{\frac{1-x}{1+x}}$ and $\ty=\tx_{-i}+\tx_{i}-\tx_{-1}-\tx_{1}$ as defined in \eqref{equ:changeVar1}. 
Taking $x=1$ and applying Au's Mathematica package \cite{Au2020} we get
\begin{align}\label{eq-Catalan}
 \sum_{n\ge 0} \frac{b_n}{(2n+1)^{2}}
 =&\,2\, \Im (\Li_{1,1}(i,-i)+\Li_{1,1}(-i,-i))=2G\approx 1.83193119, \\
 \sum_{n\ge 0} \frac{b_n}{(2n+1)^{3}}
 =&\,2\, \Im \Big(\Li_{1_3}(-i, i, i) + \Li_{1_3}(-i, i, -i) - \Li_{1_3}(-i, i, -1) - \Li_{1_3}(-i, i, 1) \nonumber\\
 -& \, \Li_{1_3}(-i, -i, -i) - \Li_{1_3}(-i, -i, i) + \Li_{1_3}(-i, -i, -1) + \Li_{1_3}(-i, -i, 1) \Big)\nonumber\\
 =&-\frac{\pi^3}{32}-\frac1{8}\pi\log^2 2+4{\rm Im}\Li_3\left(\frac{1+i}{2}\right)\approx 1.122690025,\nonumber
\end{align}
where $G=\gb(2)$ is Catalan's constant. This sum appears in \cite[Example 2.12]{CampbellCA2022}, too.
\end{ex}

\begin{ex}
As an application of Cor.~\ref{cor:algPts}, we now compute 
\begin{align*}
 \sum_{n\ge 0} \frac{1}{\binn(2n+1)^2}= \sum_{n\ge 0} \frac{b_n(1/2)}{(2n+1)^2}.
\end{align*}
From the previous example we see that 
\begin{align*}
 &\,\sum_{n\ge 0} \frac{1}{\binn(2n+1)^2}= 
 = \int_{\tfrac{1}{\sqrt{3}}}^1 (\tx_{i}-\tx_{-i})(\tx_1-\tx_{-1})  \\
 =&\,2i\left( \int_{\tfrac{1}{\sqrt{3}}}^0 (\tx_{i}-\tx_{-i})(\tx_1-\tx_{-1}) 
 +\int_0^1 (\tx_{i}-\tx_{-i})\int_{1/\sqrt{3}}^0 (\tx_1-\tx_{-1})  
 +\int_0^1 (\tx_{i}-\tx_{-i})(\tx_1-\tx_{-1}) \right) \\
 =&\, 2i\left(\int_0^{\tfrac{1}{\sqrt{3}}} (\tx_1-\tx_{-1}) (\tx_{i}-\tx_{-i})
-\int_0^1 (\tx_{i}-\tx_{-i})\int_0^{\tfrac{1}{\sqrt{3}}}  (\tx_1-\tx_{-1})
\right)+4G  \quad (\text{by \eqref{eq-Catalan}})\\
 =&\, 4\Im\Big(\Li_{1,1}\Big(\frac{-1}{\sqrt{3}},i\Big)-\Li_{1,1}\Big(\frac{1}{\sqrt{3}},-i\Big) \Big)
-\pi\log(2+\sqrt{3}) +4G 
 \approx 1.063459833.
\end{align*} 
\end{ex}

\begin{ex}
As an easy example of Thm.~\ref{thm-Mixed-Parity-CMZVs}\ref{enu:thm-Mixed-Parity-2n2n+1},
by \eqref{bn-it2} and \eqref{bn-it3} we have
\begin{align*}
 \sum_{n_1>n_2\ge 0} \frac{ b_{n_1}}{n_1^2 (2n_2+1)}
=&\, 4\int_0^{\pi/2} dt \circ (\csc t\sec t dt) \circ dt\\
=&\, 4\int_0^1 \om_1 \circ \om_{20} \circ\om_1 \qquad (\text{by $t\to \sin^{-1} t$})\\
=&\,- 4\int_0^1 (\tx_{-i}-\tx_{i})\circ (\tx_{0}+\tx_{-1}+\tx_{1})\circ (\tx_{-i}-\tx_{i})=7\ze(3)
\end{align*}
by the change of variables $t\to (1-t^2)/(1+t^2)$ then using Au's package \cite{Au2020}.
\end{ex}

\begin{ex}
For a pure $\chi$-sum, by \eqref{bn-it6} we have
\begin{align*}
 \sum_{n>0} \frac{b_n}{(2n-1)^2} = &\, \int_0^1 (\om_0+1) \om_3 \om_1
 =i\int_0^1 \td_{-i,i} \td_{-1,1} (1-\ty) \\
 =&\, 2 G-\frac1{32} \pi^3 + 4 \Im\Li_3\Big(\frac{1+i}2\Big) - \frac18 \pi\log^2 2\approx 2.954621213,
\end{align*}
where we see the weight can increase by one as predicted by
Thm.~\ref{thm-Mixed-Parity-CMZVs}\ref{enu:thm-Mixed-Parity-2n-12n+1}. Similarly,
\begin{align*}
 \sum_{n>0} \frac{b_n}{(2n-1)^3} = &\, \int_0^1 (\om_0+1) \om_0 \om_3 \om_1
 =i\int_0^1 \td_{-i,i} \td_{-1,1} \ty(\ty-1) \\
=&\, -4\gb(4)+\frac1{96}\bigg(2\Im\Li_4\Big(\frac{1+i}2\Big)
+4\pi\log^3 2 +3\pi^3 \log 2\\
&\,\hskip3cm -12\pi\log^2 2-3\pi^3 -\Im\Li_3\Big(\frac{1+i}2\Big) \bigg)\approx 2.1543060048.
\end{align*}
\end{ex}

\begin{ex}
For a sum of mixed parities as examples of
Thm.~\ref{thm-Mixed-Parity-CMZVs}\ref{enu:thm-Mixed-Parity-2n-12n+1},
by \eqref{bn-it3} and \eqref{bn-it6} we have
\begin{align*}
 \sum_{n_1>n_2\ge0 } \frac{b_{n_1}}{(2n_1-1)^2(2n_2+1)} = &\, \int_0^1 (\om_0+1) \om_3\om_{20} \om_1
 =i\int_0^1 \td_{-i,i}(\ta-\tx_{-1}-\tx_{1}) \td_{-1,1} (\ty-1) \\
 =&\, 14\gb(4) - 16 \Im\Li_4\Big(\frac{1+i}2\Big)
 - \frac1{12}\pi\log^3 2 - \frac3{16}\pi^3 \log 2 \\
&\,\hskip2cm +\frac1{8}\pi\log^2 2 + \frac5{32}\pi^3 - 4\Im\Li_3\Big(\frac{1+i}2\Big) \approx 3.937040753.
\end{align*}
So we see the weight can increase by one with a starting $\chi$-block.
\end{ex}

\begin{ex}
For a sum of mixed parities without $\gs$-block but with a starting $\gt^\star$-block,
by \eqref{bn-it4} and \eqref{bn-it5} we have
\begin{align*}
 &\, \sum_{n_1\ge n_2>0} \frac{b_{n_1}}{(2n_1+1)^2(2n_2-1)}
 = \int_0^1 \om_3(\om_5\om_3 +\om_2)\om_1\\
 = &\, \int_0^1 \om_3\om_{20} \om_1-\int_0^1 \om_0 \om_3 \om_1 \qquad(\text{since } \om_5=-d\sqrt{1-t^2}, \ \sqrt{1-t^2}\om_3=\om_0)\\
=&\, -i\int_0^1 \td_{-i,i}(\ty+\tz) \td_{-1,1}+i\int_0^1 \td_{-i,i}\td_{-1,1} \ty\\
 =&\, \frac3{16}\pi^3- 8\Im\Li_3\Big(\frac{1+i}2\Big) +\frac14\pi\log^2 2 \approx 1.630404535576,
\end{align*}
where $\ty+\tz=-\ta-\tx_{-1}-\tx_{1}$. Thus the weight is unchanged as predicted by
Thm.~\ref{thm-Mixed-Parity-CMZVs}\ref{enu:thm-Mixed-Parity-2n-12n+1}.
\end{ex}

\begin{ex} For a sum of mixed parities with a starting $\gs$-block,
followed by a $\gt$-block then a $\chi$-block, by \eqref{bn-it2}, \eqref{bn-it3} and \eqref{bn-it6} we get
\begin{align*}
 &\, \sum_{n_1>n_2\ge n_3>0} \frac{b_{n_1}}{(2n_1)^2(2n_2+1)(2n_3-1)^2}
 = \int_0^1 \om_1\om_{20}\om_5(\om_1+1)\om_3 \om_1\\
 = &\, \int_0^1 \om_1 \big(\om_{20}\om_3-\om_3\om_0\big)\om_3\om_1 \qquad(\text{since } \om_5=-d\sqrt{1-t^2}, \ \sqrt{1-t^2}\om_3=\om_0)\\
= &\,\int_0^1 \td_{-i,i} \td_{-1,1} \td_{-1,1} (\ty+\tz)\td_{-i,i} -\int_0^1 \td_{-i,i}\td_{-1,1} \ty\td_{-1,1}\td_{-i,i}\\
 =&\,\frac{G}{4}\Big(\frac{\pi^3}{4}-32\Im\Li_3\Big(\frac{1+i}2\Big)+
\pi\log^22\Big)
-\frac{15}2\bigg(\Li_5\Big(\frac12\Big)+\log2\Li_4\Big(\frac12\Big)\bigg)-\frac14\log^52\\
&\,+6\pi\gb(4)+
24\Re\Li_{3,1,1}(1,1,I)+\frac1{384}\bigg(80\pi^2\log^32-15\pi^4\log2-87\pi^2\ze(3)-2250\ze(5)\bigg)\\
&\,\approx 0.98658158829.
\end{align*}
So weight is unchanged in every step as predicted by
Thm.~\ref{thm-Mixed-Parity-CMZVs}\ref{enu:thm-Mixed-Parity-2n-12n+1}.
\end{ex}

\begin{ex}
For a sum of mixed parities with $\gs$-block followed by a $\chi$-block,
by \eqref{bn-it2} and \eqref{bn-it5} we obtain
\begin{align*}
 &\, \sum_{n_1>n_2>0} \frac{b_{n_1}}{(2n_1)^2(2n_2-1)}\\
=&\, \int_0^{\pi/2} dt\,dt \Big(\sin t\,dt \csc t\,dt +\tan t\,dt\Big) \,dt \\
=&\, \int_0^{\pi/2} dt\,dt \Big(d(-\cos t)\, \csc t\,dt +\tan t\,dt\Big)\,dt \\
=&\, \int_0^{\pi/2} dt\, \cos t\, dt \csc t\,dt+ dt\Big(\cot t\,dt +\tan t\,dt\Big) \,dt \\
=&\, \int_0^{\pi/2} dt\, (\sin t-1) \csc t\, dt+ dt \,\csc t\sec t\,dt\,dt \\
=&\, \int_0^{\pi/2} dt\,dt- \csc t\, dt+ dt \,\csc t\sec t\,dt \,dt \\
= &\, \int_0^1 \om_1\om_1-\om_3\om_1 +\om_1\om_{20}\om_1 \\
= &\, -i \int_0^1 \td_{-i,i} \td_{-1,1}+\int_0^1 \td_{-i,i}(\ty+tz)\td_{-i,i}-\td_{-i,i}^2\\
=&\,\frac18\pi^2 -2G+\frac74\zeta(3) \approx 1.5053689423.
\end{align*}
Note that not only the weight is a mix of 2 and 3, but this is a mix of both real
and imaginary parts of some CMZVs of level 4. The main complication is brought in by the 1-form $\sin t\,dt$
(corresponding to $\om_5$)
appearing in the $\chi$-block, which moves to the front after integration by parts if the block in front is a $\gs$-block
but disappears if the block in front is a $\gt^\star$-block.
\end{ex}

\begin{ex} \label{exampleS}
We apply the idea of proof of Thm.~\ref{thm-Mixed-Parity} to the sum
\begin{equation*}
S:=\sum_{n_1\ge n_2\ge n_3\ge 1} \frac{b_{n_1}}{n_1^2(2n_2+1)(2n_3-1)}.
\end{equation*}
First, we break the sum in two sub-sums $S=S_1+S_2$ where
\begin{align*}
S_1=&\, \sum_{n_1> n_2\ge n_3\ge 1} \frac{b_{n_1}}{n_1^2(2n_2+1)(2n_3-1)},\\
S_2=&\, \sum_{n_1\ge n_3\ge 1} \frac{b_{n_1}}{n_1^2(2n_1+1)(2n_3-1)}.
\end{align*}
Then by \eqref{bn-it2}, \eqref{bn-it3} and \eqref{bn-it5} we see that
\begin{align*}
S_1=&\, 4 \int_0^1 \om_1 \sum_{n_2\ge n_3>0} \frac{b_{n_2}(t)}{(2n_2+1)(2n_3-1)} \om_1\\
=&\, 4 \int_0^1 \om_1 \om_{20}\sum_{n_3>0} \frac{b_{n_3}(t)}{2n_3-1} \om_1\\
=&\, 4 \int_0^1 \om_1 (\om_0+\om_2) \om_5 \om_3 \om_1
+4 \int_0^1 \om_1 (\om_0+\om_2) \om_2 \om_1.
\end{align*}
By the change of variables $t\to \frac{1-t^2}{1+t^2}$ we obtain
\begin{align*}
S_1=&\,-4 A-4 \bar{A} -4 \int_0^1 \td_{-i,i}(\ta+\tx_{-i}+\tx_i)(\ta+\tx_{-1}+\tx_1) \td_{-i,i},
\end{align*}
where $\bar{A}$ is the complex conjugation of
\begin{align*}
A= \int_0^1 \td_{-i,i} \td_{-1,1} \frac{dt}{(i-t)^2} (\ta+\tx_{-1}+\tx_1) \td_{-i,i}.
\end{align*}
Integration by parts yields
\begin{align*}
A= &\, \int_0^1 \td_{-i,i}\frac{\tx_{-1}-\tx_1}{i-t} (\ta+\tx_{-1}+\tx_1) \td_{-i,i}
-\int_0^1 \td_{-i,i} \td_{-1,1} \frac{\ta+\tx_{-1}+\tx_1}{i-t} \td_{-i,i}.
\end{align*}
Explicitly, for all fourth roots of unity $\xi\ne i$
\begin{align}
\frac{\ta}{i-t}=i(\tx_i-\ta), \
\frac{\tx_\xi}{i-t}=\frac{\td_{i,\xi}}{\xi-i}.
\end{align}
We obtain
\begin{align*}
A= &\, \int_0^1 \td_{-i,i}\left(\frac1{-1-i} \td_{i,-1}-\frac1{1-i} \td_{i,1}\right) (\ta+\tx_{-1}+\tx_1) \td_{-i,i} \\
-&\, \int_0^1 \td_{-i,i} \td_{-1,1} \left(-i(\tx_i+\ta)-\frac1{1+i} \td_{i,-1}+\frac1{1-i} \td_{i,1}\right) \td_{-i,i}\\
= &\, \int_0^1 \td_{-i,i}\left(-\tx_i+\frac{1-i}{2}\tx_{-1}+\frac{1+i}{2}\tx_{1} \right) (\ta+\tx_{-1}+\tx_1) \td_{-i,i} \\
-&\, \int_0^1 \td_{-i,i} \td_{-1,1} \left(-i\ta+\frac{1-i}{2}\tx_{-1}-\frac{1+i}{2}\tx_{1}\right) \td_{-i,i}.
\end{align*}
Therefore using Au's package \cite{Au2020} we find that
\begin{align*}
S_1=-4 \int_0^1 \Big( \td_{-i,i}(\ta+\tx_{-1}+\tx_1)^2 \td_{-i,i}
- \td_{-i,i} \td_{-1,1}^2 \td_{-i,i}\Big)=8G^2\approx 6.71194375752575.
\end{align*}

Now we turn to $S_2$. Set \label{S2} 
\begin{align*}
S_2(x)=&\,\sum_{n_1\ge n_3\ge 1}
 \frac{b_{n_1}(x)}{n_1^2(2n_1+1)(2n_3-1)}.
\end{align*}
By partial fraction
\begin{align*}
S_2(x)=&\, \sum_{m\ge n>0} \left(\frac{4 b_{m}(x)}{(2m+1)(2n-1)}-\frac{2 b_{m}(x)}{m(2n-1)}
+ \frac{b_{m}(x)}{m^2(2n-1)}\right)\\
=&\, \sum_{m\ge n>0} \frac{4 b_{m}(x)}{(2m+1)(2n-1)}- \sum_{m>n>0} \left(\frac{2 b_{m}(x)}{m(2n-1)}
-\frac{b_{m}(x)}{m^2(2n-1)}\right)-\sum_{n>0} \left(\frac{2 b_{n}(x)}{n(2n-1)}
-\frac{b_{n}(x)}{n^2(2n-1)}\right)\\
=&\, 4\sum_{n>0} \left( f_{20}(x)\int_0^x\frac{b_{n}(t)}{2n-1} \om_1
-f_2(x)\int_0^x \frac{b_{n}(t)}{2n-1} \om_1
+\int_0^x \om_1 \frac{b_{n}(t)}{2n-1} \om_1
-\frac{b_{n}(x)}{4 n^2} \right) \\
=&\, 4 \big(f_{20}(x)-f_2(x)\big)\int_0^x ( \om_5\om_3 \om_1+\om_2 \om_1)
+4\int_0^x \om_1 ( \om_5\om_3 \om_1+\om_2 \om_1)-4\int_0^x \om_1 \om_1
\end{align*}
by \eqref{bn-it1}--\eqref{bn-it5}. Note that
\begin{align*}
\lim_{x\to 1^-} \int_0^1 &\, \om_5\om_3 \om_1= -i\int_0^1 \td_{-i,i}\td_{-1,1}\bigg(\frac{dt}{(i-t)^2}+\frac{dt}{(i+t)^2}\bigg)
= 2\, \Re \left(-i\int_0^1 \td_{-i,i}\frac{\td_{-1,1}}{i-t} \right) \\
=&\, 2\,\Im \int_0^1 \td_{-i,i}\left(\frac1{-1-i} \td_{i,-1}-\frac1{1-i} \td_{i,1}\right)
=\Im \int_0^1 \td_{-i,i}\left(-2\tx_i+(1-i)\tx_{-1}+(1+i)\tx_{1}\right),
\end{align*}
which is a finite value in $i\CMZV_2^4$.
Further, setting $\gl(x)=\sqrt{\frac{1-x}{1+x}}$ we get
\begin{align*}
\int_0^x \om_2 \om_1=&\, -i \int_{\gl(x)}^1 \td_{-i,i} (\ta+\tx_{-i}+\tx_{i}).
\end{align*}
We only need to take care of
\begin{align*}
 \int_{\gl(x)}^1 \tx_{-i} \ta=&\, \int_{\gl(x)}^1 \tx_{-i} \int_{\gl(x)}^1 \ta - \int_{\gl(x)}^1 \ta \tx_{-i}
= -\log\bigg(\frac{i+\gl(x)}{i+1}\bigg) \log\gl(x) - \int_{\gl(x)}^1 \ta \tx_{-i}
\end{align*}
of which the last term $\to -\Li_2(i)$ as $x\to 1^-$. Hence
\begin{align*}
\lim_{x\to 1^-} (f_{20}(x)-f_{2}(x)) \int_{\gl(x)}^1 \tx_{-i} \ta
=&\, -\log\bigg(\frac{i}{i+1}\bigg) \lim_{x\to 1^-} \sqrt{1-x^2}\log\sqrt{\frac{1-x}{1+x}}\\
=&\, -\log\bigg(\frac{i}{i+1}\bigg) \frac{\sqrt{2}}2 \lim_{\eps\to 0^+} \sqrt{\eps} \log(\eps/2) =0.
\end{align*}
Thus
\begin{align*}
S_2=& \, \lim_{x\to 1^-} S_2(x)=4\int_0^1 \om_1 ( \om_5\om_3 \om_1+\om_2 \om_1)-4 \int_0^1 \om_1\om_1 \\
=&\, 4\int_0^1 \td_{-i,i}\bigg(\tz-
\td_{-1,1}\Big(\frac{dt}{(i-t)^2}+\frac{dt}{(i+t)^2}\Big)\bigg)\td_{-i,i}
-2\left(\int_0^1 \om_1\right)^2\\
=&\, -4\int_0^1 \td_{-i,i}(\ta+\tx_{-i}+\tx_{i})\td_{-i,i} -2B-2\bar{B}-\frac{\pi^2}{2},
\end{align*}
where
\begin{align*}
B=&\, 2\int_0^1 \td_{-i,i}\frac{\td_{-1,1}}{i-t} \td_{-i,i}
- 2\int_0^1 \td_{-i,i} \td_{-1,1} \frac{\td_{-i,i}}{i-t} \\
=&\, 2\int_0^1 \td_{-i,i}\left(\frac1{-1-i} \td_{i,-1}-\frac1{1-i} \td_{i,1}\right) \td_{-i,i}
- \,2\int_0^1 \td_{-i,i} \td_{-1,1}\left(\frac1{-2i} (\tx_i-\tx_{-i})-
\frac{1}{i-t}-i \right)\\
=&\, \int_0^1 \td_{-i,i}\big(\tx_{-1}+\tx_{1}-i\td_{-1,1}-2\tx_i\big) \td_{-i,i}
- i\int_0^1 \td_{-i,i} \td_{-1,1} \td_{-i,i}
+2\int_0^1 \td_{-i,i} \frac{\td_{-1,1}}{i-t}+2i\int_0^1 \td_{-i,i} \td_{-1,1}\\
=&\, \int_0^1 \td_{-i,i}\big(\tx_{-1}+\tx_{1}-i\td_{-1,1}-2\tx_i\big) ( \td_{-i,i}+1)
- i\int_0^1 \td_{-i,i} \td_{-1,1} \td_{-i,i}+2i\int_0^1 \td_{-i,i} \td_{-1,1}.
\end{align*}
Hence
\begin{align*}
B+\bar{B}=2\left(\int_0^1 \td_{-i,i}\big(\tx_{-1}+\tx_{1}-\tx_{-i}-\tx_i\big) \td_{-i,i}
+\int_0^1 \td_{-i,i}\td_{-i,i} +i\int_0^1 \td_{-i,i} \td_{-1,1}\right).
\end{align*}
We can see that $S_2=7\ze(3)-8G \approx 1.0866735685$ by Au's package \cite{Au2020} and therefore
\begin{equation*}
 \sum_{n_1\ge n_2\ge n_3\ge 1} \frac{b(n_1)}{n_1^2(2n_2+1)(2n_3-1)}
 =7\ze(3)+8G^2-8G\approx 7.79861732643.
\end{equation*}
\end{ex}

In the next three examples, we consider some Ap\'ery-type series which involve the square of the central binomial coefficients.

\begin{ex} \label{eg:W5}
Since $\chi$-block does not appear in this example we see that the weight of the CMZVs
is the same as the weight of the series, as predicted by Thm.~\ref{thm-binnSquare}\ref{enu:thm-binnSquare-2n2n+1}:
\begin{align*}
\sum_{n_1 \ge n_2>0}
\frac{b_{n_1}^2}{ (2n_1+1)^4(2n_2) }
 = &\, \int_0^1 \om_1 \om_0 \om_3 \om_2 \om_1= \int_0^1\td_{-i,i} \tz \td_{-1,1} \ty \td_{-i,i} =W_5 \approx
 0.04433915814,
\end{align*}
where
\begin{align*}
W_5=&\, 5\log2 \Li_4\Big(\frac12\Big) + 21 \Li_5\Big(\frac12\Big)
 + \pi \bigg(16 \Im\Li_4\Big(\frac{1+i}2\Big)-17 \gb(4) +8 \Im\Li_3\Big(\frac{1+i}2\Big) \log2\bigg) \\
&\,+ \frac{379}{2880} \pi^4 \log2 + \frac{1}{30}\log^5 2- 16 \Re \Li_{3, 1, 1}(1, 1, i) -
 \frac{1}{192} \pi^2 \Big(16 \log^3 2 - 29 \ze(3)\Big) - \frac{27}{4} \ze(5).
\end{align*}

\end{ex}

\begin{ex}
The sum next has weight 4 but due to the $\gs$-$\chi$-block chain we need
to use CMZVs of weight of both 3 and 4 to express it:
\begin{align*}
 \sum_{n_1 > n_2>0}
&\, \frac{b_{n_1}^2}{ (2n_1)^3(2n_2-1) }
 =\int_0^1 \om_3 \om_1 (\om_5\om_3+\om_2) \om_1
 = \int_0^1 \om_3 (\om_1\om_0-dt\om_3) \om_1+\om_3 \om_1\om_2\om_1 \\
 &\, \ \hskip2cm \qquad(\text{since } \om_5=-d\sqrt{1-t^2}, \ \sqrt{1-t^2}\om_3=\om_0, \ \sqrt{1-t^2}\om_1=dt)\\
 =&\,\int_0^1 (\om_3 \om_1-\om_1 \om_3)\om_1+\om_3\om_1\om_0\om_1+\om_3 \om_1\om_2\om_1 \qquad(\text{since } t\om_3=\om_1)\\
 =&\,\int_0^1 \td_{-i,i}(\td_{-i,i}\td_{-1,1}-\td_{-1,1}\td_{-i,i})-\td_{-i,i}(\ty+\tz) \td_{-i,i}\td_{-1,1}\\
 =&\, 2 G^2 - G \pi \log2 +
 \frac{1}{64} \pi \bigg(3 \pi^3 - 128 \Im\Li_3\Big(\frac{1+i}2\Big) + 4 \pi \log^2 2\bigg) + 2 G \pi - \frac{21}{4}\ze(3) \\
 \approx &\, 0.40829155182.
\end{align*}
\end{ex}

\begin{ex}
The series in this example does not have a $\gs$-$\chi$-block chain so that there is no weight drop. But the first block is a
$\chi$-block so the weight can increase by two as predicted by Thm.~\ref{thm-binnSquare}\ref{enu:thm-binnSquare-gen}:
\begin{align*}
\sum_{n_1 > n_2>0} \frac{b_{n_1}^2}{ (2n_1-1)^3(2n_2) }
 =& \, \int_0^1 \om_1(\om_0+1)^2 \om_3 \om_2\om_1 \\
 =&\,-\int_0^1 \td_{-i,i} \tz \td_{-1,1} (\ty-1)^2 \td_{-i,i} =W_6+2W_5+W_4\approx 0.38530528471,
\end{align*}
where $W_5$ is defined in Example~\ref{eg:W5} and
\begin{align*}
W_4=&\, -2 G^2 - \frac{49}{720} \pi^4 + 2 \pi \Im\Li_3\Big(\frac{1+i}2\Big) -
 \frac{11}{48} \pi^2 \log^2 2+ \frac{1}{6}\log^4 2 + G \pi\log^2 2 +
 4 \Li_4\Big(\frac12\Big), \\
W_6=&\, 68 \Li_6\Big(\frac12\Big) -\frac{7655}{27648} \pi^6 +
 \frac{61}{2} \pi \Im \Li_{4, 1}(i, 1) - \frac{41}{2} \pi \Im \Li_{4, 1}(i, -1) + 96 \pi \Im\Li_5\Big(\frac{1+i}2\Big) \\
 &\, - 19 \pi \gb(4) \log2 + 32 \pi \Im\Li_4\Big(\frac{1+i}2\Big) \log2 - \frac{181}{2880} \pi^4 \log^2 2 - \frac{1}{96} \pi^2 \log^4 2 + \frac{1}{90}\log^62 \\
 &\, - \frac{169}{4} \ze(\bar5,1) - \frac{5}{12} \pi^2 \Li_4\Big(\frac12\Big) +
 10 \log2 \Li_5\Big(\frac12\Big) - 24 G \gb(4) - 6 \pi^2 \log2 \ze(3) \\
 &\, -24 \Re \Li_{4, 2}(-1, i) + 64 \Re \Li_{3, 1, 1, 1}(1, 1, 1, i) + \frac{8195}{128} \ze(3)^2 +
 \frac{2821}{32} \log2 \ze(5).
\end{align*}
\end{ex}

\medskip

\noindent{\bf Acknowledgement.} Ce Xu is supported by the National Natural Science Foundation of China [Grant No. 12101008], the Natural Science Foundation of Anhui Province [Grant No. 2108085QA01] and the University Natural Science Research Project of Anhui Province [Grant No. KJ2020A0057]. Jianqiang Zhao is supported by the Jacobs Prize from The Bishop's School.

\medskip


\end{document}